\theoremstyle{plain}
\newtheorem{theorem}{Theorem}[section]
\newtheorem{lemma}[theorem]{Lemma}
\newtheorem{prop}[theorem]{Proposition}
\theoremstyle{definition}
\newtheorem{definition}{Definition}[section]
\theoremstyle{remark}
\newtheorem*{rem}{Remark}
\begin{document}
\title{Explicit Flock Solutions for Quasi-Morse potentials}
\author{J. A. CARRILLO,
    Y. HUANG,
    S. MARTIN \\
Department of Mathematics, Imperial College
    London, \\
 London, SW7 2AZ, UK\footnote{ 
    Email\textup{\nocorr: \{carrillo,
    yanghong.huang, stephan.martin\}@imperial.ac.uk }
}
}
\maketitle
\begin{abstract}
We consider interacting particle systems and their mean-field
limits, which are frequently used to model collective aggregation
and are known to demonstrate a rich variety of pattern formations.
The interaction is based on a pairwise potential combining
short-range repulsion and long-range attraction. We study
particular solutions, that are referred to as flocks in the
second-order models, for the specific choice of the Quasi-Morse
interaction potential. Our main result is a rigorous analysis of
continuous, compactly supported flock profiles for the
biologically relevant parameter regime. Existence and uniqueness
are proven for three space dimensions, whilst existence is shown for
the two-dimensional case. Furthermore, we numerically investigate
additional Morse-like interactions to complete the understanding
of this class of potentials.
\end{abstract}

\section{Introduction}
Self-organization, complex pattern formation, and rich dynamic
structures are common features of collective motion of
individuals. Fish shoals, bird flocks, insects swarms,
myxobacteria formations, and many others are just particular
instances of these fascinating phenomena
\cite{Camazine_etal,Couzin:Krause}. A large number of models have
been introduced based on social interaction mechanisms between
individuals, namely: long-range attraction, short-range repulsion,
and alignment; see \cite{Huth:Wissel1992,HCH,lukeman} for example.

Here, we concentrate on the by-now classical models in which the
attraction and repulsion between individuals are taken into account
via a pairwise radial potential $W(x)=U(|x|)$. A first-order
aggregation model of swarming
(\cite{M&K,nano1,nano2,BertozziCarilloLaurent}) then reads
\begin{equation}
\label{eq:aggr}
 \frac{dx_i}{dt} = - \frac{1}{N} \sum_{j\neq i} \nabla W(x_i-x_j).
\end{equation}
For a second-order model for swarming, an
asymptotic cruise speed is fixed by the balance of self-propulsion
and friction terms, see \cite{PhysRevE.63.017101,d2006self}. The governing system of equations
for the particle dynamics $(x_i,v_i) \in \mathbb{R}^n\times
\mathbb{R}^n, i=1,2,\ldots,N$ is
\begin{equation}
\label{eq:partsys2}
    \begin{split}
 \frac{dx_i}{dt} &= v_i, \\
\frac{dv_i}{dt} &= \alpha v_i-\beta |v_i|^2 v_i - \frac{1}{N}
\sum_{j\neq i} \nabla W(x_i-x_j).
    \end{split}
\end{equation}
The self-propulsion term $\alpha v_i - \beta |v_i|^2 v_i$ with
Rayleigh-type dissipation can also be generalized to the form
$f(|v_i|)v_i$ for some function $f: [0,\infty) \to \mathbb{R}$,
such that $f(0)>0$ and $f(\upsilon)$ becomes negative when
$\upsilon$ is large enough. In both models, the potential $W$ is
assumed to be repulsive at short range ($U(r)$ decreases for small
$r>0$) and attractive at long range ($U(r)$ increases for $r$
large enough). The most popular one used in the literature is the
Morse-type potential~\cite{PhysRevE.63.017101,d2006self}:
\begin{equation}\label{eq:MorsePotential}
    U(r) = C_{\mathcal{R}} e^{-r/\ell_\mathcal{R}} - C_\mathcal{A} e^{-r/\ell_\mathcal{A}},
\end{equation}
where $C_\mathcal{R},C_\mathcal{A}$ specify the strength of the
repulsive and attractive forces, and
$\ell_\mathcal{R},\ell_\mathcal{A}$ specify their length scales.

Depending on the parameters, the system~\eqref{eq:partsys2}
exhibits a rich variety of patterns: flocks, rotating mills,
rings, and clumps~\cite{PhysRevE.63.017101,d2006self}. To further
study the emergence and bifurcation of these patterns, one has to
resort to the corresponding continuum equations, derived from
either kinetic theory or mean field approximation in the limit
when the number of particles $N$ goes to infinity. The system of
equations for the continuous density $\rho$ and the velocity $u$
reads ~\cite{PhysRevE.63.017101,MR2369988,MR2507454}
\begin{equation}
\begin{split}
    \frac{\partial \rho}{\partial t} + \operatorname{div} (\rho u)
    &=0,\cr
    \frac{\partial u}{\partial t} + (u\cdot \nabla )u &=
    (\alpha - \beta |u|^2)u - \nabla W\star \rho,
\label{eq:conteq}
\end{split}
\end{equation}
where $W\star\rho$ is the convolution between $W$ and $\rho$. 
In particular, a coherent moving flock is a solution such that
$u(x,t)=u_0$, $\rho(x,t) = \rho_F(x-u_0t)$ for some constant
velocity $u_0$ with $|u_0|^2=\tfrac{\alpha}{\beta}$, and steady
density $\rho_F$ satisfying the equation $\nabla W\star \rho_F =
0$ on the support of
$\rho_F$~\cite{MR2507454,CKMT,Carrillo2013,ABCV}. If we deal with
densities supported on an open set, the existence of flock
solutions for~\eqref{eq:partsys2} is reduced to $W\star \rho = D$,
on $\text{supp}[\rho]$ for some constant $D$, where the subscript
$F$ for the steady flock solution $\rho_F$ is dropped in the rest
of the paper for simplicity.

As a matter of fact, flock solutions in this generality coincide with the
stationary solutions for the first-order continuum model derived from
\eqref{eq:aggr}, which reads
\begin{equation}
    \frac{\partial \rho}{\partial t} + \operatorname{div} (( -\nabla W\star \rho)\rho )
    = 0.
    \label{eq:contaggr}
\end{equation}

The existence of some particular explicit stationary solutions
where the density is uniformly concentrated on a ring
\cite{KSUB,BCLR}, both for the discrete model \eqref{eq:aggr} and
the continuum case \eqref{eq:contaggr}, has led to a thorough
study of their stability and properties in the framework of the
first-order models
\cite{KSUB,predict,soccerball,BCLR,BCLR2,bigring}. The stability
of the ring flock solutions for the second-order model
\eqref{eq:partsys2} has been recently tackled in \cite{ABCV}.
However, in many instances, as in the archetypical Morse
potentials, we do observe nicely compactly supported radial flocks
in simulations. In the rest of this work, we will concentrate in
finding non-concentrated flock profiles for both \eqref{eq:conteq}
and \eqref{eq:contaggr}:

\begin{definition}[Flock profile]
For a given $W$, a \emph{flock profile} is defined as a radially
symmetric continuous probability density $\rho(r)$, compactly supported on a
ball of radius $R_F$ satisfying the characteristic equation
\begin{equation}\label{eq:steadyeq}
 W\star \rho = D,\qquad  \text{on } \text{supp}[\rho]=B(0,R_F)
 \mbox{ for some constant $D$}.
\end{equation}
\end{definition}

Despite their observation in simulations of~\eqref{eq:partsys2}
with a variety of attractive-repulsive potentials, there is nearly
no analytical study of the existence and bifurcation of these
flocks in the parameter space. The reason lies in the great
difficulties in solving the integral equation \eqref{eq:steadyeq}
for popular potentials like~\eqref{eq:MorsePotential}. Multiple
solutions may exist (see ~\cite{PhysRevE.63.017101}) by a Newton
solver, where the non-physical solutions are shown to be unstable.
Other solutions that are available are in general asymptotic, when
the the density is concentrated on a thin
annulus~\cite{MR2788924}. Another fully explicit case corresponds
to the Newtonian repulsion with quadratic confinement
$W(x)=\tfrac{|x|^2}2-\tfrac{|x|^{2-n}}{2-n}$ for which the
solution is the characteristic function of a ball with suitable
radius. However, for any other member of the family of potentials 
$$
W(x)=\frac{|x|^a}a-\frac{|x|^b}b, \qquad a>b\geq 2-n\,,
$$
with the convention that $\tfrac{|x|^0}0=\log x$, they are no
longer explicit, see \cite{FHK,FH,BCLR2}. Moreover, flock profiles
play an important role on the dynamics of \eqref{eq:partsys2}
since they form a stable family of attracting solutions as shown
in \cite{CHM2} for general potentials under suitable conditions.

One approach to get explicit solutions of
equation~\eqref{eq:steadyeq} is to replace $W$ with an
analytically more tractable kernel, for instance the so called
\emph{Quasi-Morse potential} proposed in~\cite{Carrillo2013},
instead of~\eqref{eq:MorsePotential}. The great simplification
with Quasi-Morse potential comes from an explicit expression
of $\rho$, characterized by only  three parameters, which is obtained by solving an
ODE derived from~\eqref{eq:steadyeq}. The three parameters are
found in \cite{Carrillo2013} by a numerical procedure involving
the computation of the convolution in the left-hand side
of~\eqref{eq:steadyeq}. The resulting numerical solutions in two
and three dimensions agree very well with those approximated from
the particle simulations. In this paper, we show that this
computationally intensive convolution can be evaluated as a few
algebraic terms, hence the existence/non-existence of the flock
profile in the parameter space can be discussed in
detail.

We start in Section 2 by summarizing the properties of the
Quasi-Morse potentials and deriving new explicit formulas for the
convolution \eqref{eq:steadyeq}. Section 3 is devoted to the
analysis of existence and uniqueness of flock profiles in the
three dimensional case, with respect to the parameter space of the
potential. In Section 4, we perform a similar analysis in two
dimensions to identify the existence of flock profiles in
parameter space. Due to the simplification of the Bessel
functions in three dimensions, the expressions are easier to
manage and the result obtained is more complete in three
dimensions. Section 5 deals with further remarks on the
Quasi-Morse potentials and asymptotic cases. Finally, we end this
work in Section 6 by investigating similar properties in Morse-like potentials
to numerically ascertain how generic the case of the Quasi-Morse
potential is.


\section{The Quasi-Morse potential and explicit flock profiles in
general dimensions} \label{sec2}

For completeness, we first review the basic properties and the
explicit solutions proposed in~\cite{Carrillo2013}. The new
pairwise \emph{Quasi-Morse potential} $W(x)=U(|x|)$ still assumes
the form $U(r)=V(r)-V_\ell(r)$, where now $V(r)$ is the
fundamental solution of the second-order differential operator
$\Delta -k^2\operatorname{Id}$ (i.e., $\Delta V - k^2 V =
\delta_0$) and $V_\ell(r) = CV(r/\ell)$ is a rescaled version of
$V(r)$ (i.e., $\Delta V_\ell -
\frac{k^2}{\ell^2}V_\ell=\ell^{n-2}\delta_0$). For simplicity,
here the attraction strength $C_\mathcal{A}$ and length scale
$\ell_\mathcal{A}$ are normalized to be unity, and then
$C=C_\mathcal{R}$ and $\ell=\ell_\mathcal{R}$.

The biologically relevant cases correspond to the radial potential
$U(r)$ possessing a unique global minimum at some positive radius.
It was proven in \cite{Carrillo2013} that the biologically
relevant parameter region is $C \ell^{n-2}>1$ and $\ell<1$ for
dimensions one to three . The explicit expressions for $V(r)$ in
these dimensions are given in~\cite{Carrillo2013} as
$-e^{-kr}/2k$, $-K_0(kr)/2\pi$, $-e^{-kr}/4\pi r$ respectively. To
present the discussion in a unified context for dimension $n$, we
write $V(r)$ in terms of the modified Bessel functions of the
second kind~\cite{MR1817225}, i.e.,
\[
 V(r) = -(2\pi)^{-\frac{n}{2}}r^{1-\frac{n}{2}}k^{\frac{n}{2}-1}
K_{\frac{n}{2}-1}(kr),
\]
and correspondingly
\begin{equation}\label{eq:quasimorse}
 U(r) =  (2\pi)^{-\frac{n}{2}}r^{1-\frac{n}{2}}k^{\frac{n}{2}-1}
\Big(
C\ell^{\frac{n}{2}-1}K_{\frac{n}{2}-1}\big(kr/\ell\big)-
K_{\frac{n}{2}-1}\big(kr\big)
\Big).
\end{equation}
In particular, $U$ reduces to the conventional Morse
potential~\eqref{eq:MorsePotential} in dimension one as
$K_{-\frac{1}{2}}(x)=\sqrt{\frac{\pi}{2x}}e^{-x}$ (see
Appendix~\ref{sec:bessel}, with other properties of the Bessel
function $J_\nu(x)$ and modified Bessel functions $K_\nu(x)$ and
$I_\nu(x)$ used later).

One of the advantages of the Quasi-Morse
potential~\eqref{eq:quasimorse} is that the integral
equation~\eqref{eq:steadyeq} can be transformed into a second-order ODE for the radial density $\rho(r)$. Applying the operators
$\Delta - k^2\operatorname{Id}$ and $\Delta -
\frac{k^2}{\ell^2}\operatorname{Id}$ to both sides
of~\eqref{eq:steadyeq} as in \cite{MR2788924,Carrillo2013}, the
density $\rho$ now satisfies
\begin{equation*}
    \Delta \rho +A \rho =
    \frac{k^4}{\ell^2-C\ell^{n}}D, \qquad
\text{ on}\ \text{ supp } \rho,
\end{equation*}
with the \emph{aggregate potential parameter} $A = k^2\big(1-C\ell^n\big)/\big(C\ell^n-\ell^2\big)$. In radial
coordinate $r$,  this equation reads
\begin{equation}\label{eq:rhoradialeq}
    \frac{1}{r^{n-1}}\frac{d}{dr}r^{n-1}\frac{d\rho}{dr}
    \pm a^2 \rho = \frac{k^4}{\ell^2-C\ell^{n}}D,\qquad
a = \sqrt{|A|}.
\end{equation}
The general solution, assumed to be bounded at the origin,
takes the form (see~\cite{Carrillo2013} for $n=2,3$)
\begin{equation}\label{eq:explicitsol}
     \rho(r) = \begin{cases}
        \mu_1 r^{1-\frac{n}{2}}J_{\frac{n}{2}-1}(ar)+\mu_2,\qquad & A>0,\cr
        \qquad\mu_1 r^2 +\mu_2, & A=0, \cr
        \mu_1 r^{1-\frac{n}{2}}I_{\frac{n}{2}-1}(ar)+\mu_2, & A<0,
    \end{cases}
\end{equation}
on $[0,R]$ and $\rho(r) \equiv 0$ on $(R,\infty)$. For any fixed
radius $R$, the parameters $\mu_1$ and $\mu_2$ have to be adjusted
to fit the integral equation~\eqref{eq:steadyeq} and ensure
positivity of $\rho(r)$ on $r \in [0,R]$. In fact, this is exactly
how the numerical solutions are obtained in~\cite{Carrillo2013},
where the observed flock profiles exist only when $A>0$. Despite
the perfect agreement with particle simulations, the convolution
$W\star\rho$ remains the bottleneck of the computation. In this
paper, we show that the convolution can also be reduced to a few
algebraic terms, eventually leading to  the rigorous
existence/non-existence proofs of radial solutions in the
different parameter regimes.

The simplification of the convolution $W\star\rho$ is suggested by
the following observation: when the operators $\Delta - k^2
\operatorname{Id}$ and $\Delta - \frac{k^2}{\ell^2}
\operatorname{Id}$ are applied to both sides
of~\eqref{eq:steadyeq}, we get a fourth-order ordinary
differential equation (in the radial coordinate $r$)
\[
    \left(
\frac{1}{r^{n-1}}\frac{d}{dr}r^{n-1}\frac{d}{dr}-\frac{k^2}{\ell^2}
    \right)
    \left(
\frac{1}{r^{n-1}}\frac{d}{dr}r^{n-1}\frac{d}{dr}-k^2
\right) W\star\rho = \frac{k^4}{\ell^2}D
\]
for the radial function $W\star \rho$, which is equivalent
to~\eqref{eq:rhoradialeq}. The general solution of the fourth-order ODE takes the form
\begin{align}\label{eq:Wrhosol}
    (W\star \rho)(r) =
    D \,&+ \lambda_1r^{1-\frac{n}{2}} I_{\frac{n}{2}-1}(kr/\ell)+
    \lambda_2   r^{1-\frac{n}{2}} I_{\frac{n}{2}-1}(kr)\nonumber\\&+ \lambda_3 r^{1-\frac{n}{2}} K_{\frac{n}{2}-1}(kr/\ell)+
    \lambda_4   r^{1-\frac{n}{2}} K_{\frac{n}{2}-1}(kr), \qquad 0\leq r \leq R,
\end{align}
for some coefficients $\lambda_1,\dots,\lambda_4$.
We will find the desired flock profiles when all $\lambda_i$ vanish and thus \eqref{eq:steadyeq} is fulfilled.
We first notice that $\lambda_3$ and $\lambda_4$ have to vanish in order to have a bounded solution at the origin with
bounded derivatives.
Imposing that  $\lambda_1$ and $\lambda_2$ vanish will lead to necessary and sufficient conditions for a flock profile.
Following this strategy, $D, \lambda_1$ and $\lambda_2$ will be expressed in terms of the support size $R$ and
the coefficients $\mu_1, \mu_2$ by inserting
\eqref{eq:explicitsol} into the left-hand side of
\eqref{eq:Wrhosol}.

First, we compute $\lambda_1, \lambda_2$ for the explicit
solution in \eqref{eq:explicitsol}. It turns out that the
convolution $W\star\rho$ can be obtained by direct integrations.
To start, because of the radial symmetry, $W\star \rho$ can be
written as
\begin{equation}\label{eq:convrho}
(W\star {\rho})(x) = \int_{|y|\leq R} W(x-y){\rho}(|y|)dy
 =\int_0^{R} \left(\int_{\partial B(0,1)} W(x-s\omega) d\omega
\right) {\rho}(s)s^{n-1}ds.
\end{equation}
This convolution, as a function of $r=|x|$,
simplifies in the particular case of the Quasi-Morse
potential $W(x) = V(|x|)-CV(|x|/\ell)$. In fact, the integral on
the unit sphere $\partial B(0,1)$ above can be evaluated using the
following formula (see~\cite[p. 90]{MR0232968})
\begin{multline}\label{eq:radsimp}
\int_0^\pi \big(a^2+b^2-2ab\cos\theta\big)^{-\nu/2}
K_\nu\Big( \big(a^2+b^2-2ab\cos\theta\big)^{1/2}\Big)\sin^{2\nu}\theta
d\theta \cr
=\pi^{1/2}\Gamma\Big(\frac{1}{2}+\nu\Big)
\Big(\frac{2}{ab}\Big)^{\nu}I_\nu\big(\min(a,b)\big)K_\nu\big(\max(a,b)\big).
\end{multline}
Let us detail the computation of this angular integral for the second
component $V_\ell(r) = CV(r/\ell)$ of $W$, as the integral for
$V(r)$ is the special case of $C=\ell=1$. Setting $\nu=n/2-1$,
$a=kr/\ell$ and $b=ks/\ell$, the angular integration involving
$V_\ell$ in \eqref{eq:convrho} reads
\begin{align}
 \int_{\partial B(0,1)} \!\!\!\!\!\!V_\ell(x-s\omega) d\omega
&=-C\frac{2\pi^{\frac{n-1}{2}}}{\Gamma\left(\frac{n-1}{2}\right)}
(2\pi)^{-\frac{n}{2}}k^{n-2} \int_0^\pi D(\theta)^{-\nu/2}
K_\nu\big(D(\theta)^{1/2}\big) \sin^{2\nu}\theta d\theta \nonumber
\\ &= - C\ell^{n-2}(rs)^{1-\frac{n}{2}}I_{\frac{n}{2}-1}\Big(
\frac{k}{\ell}\min(r,s)\Big) K_{\frac{n}{2}-1}\Big(
\frac{k}{\ell}\max(r,s)\Big), \label{eq:angint}
\end{align}
where $D(\theta)=\frac{k^2}{\ell^2}(r^2+s^2-2rs\cos\theta)$. As a
result, the convolution~\eqref{eq:convrho} becomes an integral in
$s$ only and the convolution of the repulsive potential $V_\ell$
with a density $\rho$ supported on the ball $B(0,R)$ is
\begin{multline}\label{eq:repconvint}
  V_\ell\star \rho(x) =  C\ell^{n-2}r^{1-\frac{n}{2}}
    \left[
        K_{\frac{n}{2}-1}(kr/\ell)\int_0^r
        s^{\frac{n}{2}}I_{\frac{n}{2}-1}(ks/\ell)\rho(s) ds
\right. \cr \left.        +
      I_{\frac{n}{2}-1}(kr/\ell)\int_r^R
        s^{\frac{n}{2}}K_{\frac{n}{2}-1}(ks/\ell)\rho(s) ds
        \right],
\end{multline}
for $0\leq r=|x|\leq R$. This integral, when $\rho$ takes the special
form~\eqref{eq:explicitsol}, can be further simplified using
various integral identities of (modified) Bessel functions. Since
these algebraic manipulations do not bring any further insights, we
have postponed them to Appendix~\ref{sec:expconvdev}. The final
result, whose general forms are already expected from~\eqref{eq:Wrhosol},
 is as follows.

\begin{prop}\label{prop1}
Given the Quasi-Morse potential $W(x)=U(|x|)$ in
\eqref{eq:quasimorse} and $\rho$ defined in
\eqref{eq:explicitsol}, the convolution $W\star \rho$ has the
expression:
\begin{equation}\label{eq:explicitconv}
W\star \rho (x) =
\begin{cases}
    \frac{\mu_2}{k^2}(C\ell^n-1)+\frac{R^{\frac{n}{2}}}{k}
    r^{1-\frac{n}{2}}\left[
        B_+(1)K_{\frac{n}{2}}(kR)I_{\frac{n}{2}-1}(kr)\right.\cr
\left.\qquad\qquad\quad
-C\ell^{n-1}B_+(\ell)K_{\frac{n}{2}}(kR/\ell)
        I_{\frac{n}{2}-1}(kr/\ell)
        \right]  \qquad & A>0,\\[2mm]
        \frac{2n\mu_1}{k^4}(C\ell^{n+2}-1)
        +R^{\frac{n}{2}}r^{1-\frac{n}{2}}\left[
            B_0(1)
            K_{\frac{n}{2}}(kR)
            I_{\frac{n}{2}-1}(kr)
            \right.
            \cr
\left.\qquad\qquad\quad  -C\ell^{n-1}B_0(\ell)
            K_{\frac{n}{2}}(kR/\ell)
I_{\frac{n}{2}-1}(kr/\ell)
            \right] \qquad &A=0,\\[2mm]
            \frac{\mu_2}{k^2}(C\ell^n-1)+\frac{R^{\frac{n}{2}}}{k}
            r^{1-\frac{n}{2}}\left[
                B_-(1)
                K_{\frac{n}{2}}(kR)
                I_{\frac{n}{2}-1}(kr)
                \right.\cr
                \left.\qquad\qquad\quad
                -C\ell^{n-1}B_-(\ell)K_{\frac{n}{2}}(kR/\ell)
                I_{\frac{n}{2}-1}(kr/\ell)
                \right] \qquad &A<0.
\end{cases}
    \end{equation}
    where $B_+(\xi)=\tilde{B}_+(\xi)\mu_1 + \mu_2,
    B_0(\xi) = \tilde{B}_0(\xi)\mu_1 +\mu_2,
    B_-(\xi) = \tilde{B}_-(\xi)\mu_1 + \mu_2$, and
\begin{flalign}
\tilde{B}_+(\xi) &=
    R^{1-\frac{n}{2}}\left(1+\frac{a^2\xi^2}{k^2}
    \right)^{-1}\left[ J_{\frac{n}{2}-1}(aR)
        \frac{K_{\frac{n}{2}-2}\big(kR/\xi\big)}
        {K_{\frac{n}{2}}\big(kR/\xi\big)} +
        \frac{a\xi}{k}J_{\frac{n}{2}-2}(aR)
        \frac{K_{\frac{n}{2}-1}\big(kR/\xi\big)}{
            K_{\frac{n}{2}}\big(kR/\xi\big)}\right],\nonumber\\
            \tilde{B}_0(\xi) &=
            \frac{2\xi}{k}R\frac{K_{\frac{n}{2}+1}\big(kR/\xi\big)}
            {K_{\frac{n}{2}}\big(kR/\xi\big) }
            +1,\label{Bequationsgeneral}\\
    \tilde{B}_-(\xi) &=
    R^{1-\frac{n}{2}}\left(1-\frac{a^2\xi^2}{k^2}
    \right)^{-1}\left[ I_{\frac{n}{2}-1}(aR)
        \frac{K_{\frac{n}{2}-2}\big(kR/\xi\big)}
        {K_{\frac{n}{2}}\big(kR/\xi\big)} +
        \frac{a\xi}{k}I_{\frac{n}{2}-2}(aR)
        \frac{K_{\frac{n}{2}-1}\big(kR/\xi\big)}{
            K_{\frac{n}{2}}\big(kR/\xi\big)}
            \right].\nonumber
\end{flalign}
\end{prop}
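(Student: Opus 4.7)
The plan is to substitute the explicit profile \eqref{eq:explicitsol} into the one-dimensional reduction \eqref{eq:repconvint} and evaluate the resulting integrals in closed form. Since $W=V-V_\ell$ and the angular integration \eqref{eq:angint} was carried out for $V_\ell$ with arbitrary $C$ and $\ell$, the convolution $V\star\rho$ is obtained from the same formula by specialising $C=\ell=1$; it therefore suffices to compute $V_\ell\star\rho$ in each of the three cases $A>0$, $A=0$, $A<0$ and then form the difference.

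Writing $\rho(s)=\mu_1\phi(s)+\mu_2$ with $\phi(s)$ equal to $s^{1-n/2}J_{n/2-1}(as)$, $s^2$, or $s^{1-n/2}I_{n/2-1}(as)$ according to the sign of $A$, each of the two radial integrals in \eqref{eq:repconvint} splits into a $\mu_1$-part and a $\mu_2$-part. The $\mu_1$-parts reduce to integrals of the form $\int s\,\mathcal{F}_\nu(as)\mathcal{G}_\nu(\kappa s)\,ds$ with $\mathcal{F}_\nu\in\{J_\nu,I_\nu\}$, $\mathcal{G}_\nu\in\{I_\nu,K_\nu\}$, $\nu=n/2-1$ and $\kappa\in\{k,k/\ell\}$; these admit closed-form antiderivatives (Lommel-type identities) obtained by combining the two Bessel differential equations that $\mathcal{F}_\nu(as)$ and $\mathcal{G}_\nu(\kappa s)$ satisfy, and the resulting denominators $a^2\pm\kappa^2$ are the source of the prefactors $(1\pm a^2\xi^2/k^2)^{-1}$ in $\widetilde B_\pm(\xi)$. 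The $A=0$ case uses elementary antiderivatives of $s^{n/2+1}\mathcal{G}_\nu(\kappa s)$, and the $\mu_2$-parts reduce to the standard integrals $\int s^{n/2}I_{n/2-1}(\kappa s)\,ds=(s^{n/2}/\kappa)I_{n/2}(\kappa s)$ and $\int s^{n/2}K_{n/2-1}(\kappa s)\,ds=-(s^{n/2}/\kappa)K_{n/2}(\kappa s)$ coming from $(x^\nu I_\nu)'=x^\nu I_{\nu-1}$ and $(x^\nu K_\nu)'=-x^\nu K_{\nu-1}$.

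Evaluating the definite integrals produces boundary terms at $s=0$, $s=r$ and $s=R$. The $s=0$ contributions vanish by the small-argument behaviour of $J_\nu$ and $I_\nu$. The $s=r$ terms, picked up from both integrals in \eqref{eq:repconvint}, carry opposite combinations of $I_{n/2-1}(kr/\xi)$ and $K_{n/2-1}(kr/\xi)$ and collapse via the Wronskian identity $I_\nu(z)K_{\nu-1}(z)+I_{\nu-1}(z)K_\nu(z)=1/z$ into an $r$-independent constant; this is the mechanism that generates the $\tfrac{\mu_2}{k^2}(C\ell^n-1)$ term (and its $A=0$ analogue) in \eqref{eq:explicitconv}. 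Finally, the $s=R$ contributions, after applying the $K$-recurrence $K_{\nu-1}(z)=K_{\nu+1}(z)-(2\nu/z)K_\nu(z)$ to pull out the common normalisation $K_{n/2}(kR/\xi)$, package exactly into $\widetilde B_\pm(\xi)$ and $\widetilde B_0(\xi)$ as displayed in \eqref{Bequationsgeneral}, confirming the general form \eqref{eq:Wrhosol} with $\lambda_3=\lambda_4=0$. The main obstacle is not conceptual but purely combinatorial: two convolutions ($V$ and $V_\ell$), four boundary contributions per definite integral, and three distinct functional forms for $\rho$, all intertwined by recurrences among $J_\nu$, $I_\nu$, $K_\nu$ at the two argument scales $kr$ and $kr/\ell$. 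Since nothing is required beyond the Lommel-type identities, the $(I,K)$-Wronskian and Bessel recurrences, the computation is lengthy but mechanical and is deferred to Appendix~\ref{sec:expconvdev}.
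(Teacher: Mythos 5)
Your overall route is exactly the paper's: reduce to the radial integral \eqref{eq:repconvint}, treat only $V_\ell$ and recover $V$ by setting $C=\ell=1$, evaluate the $\mu_1$- and $\mu_2$-parts with the Lommel-type antiderivatives and $\int x^\nu I_{\nu-1}\,dx=x^\nu I_\nu$, $\int x^\nu K_{\nu-1}\,dx=-x^\nu K_{\nu}$, and package the $s=R$ boundary terms into $\tilde B(\xi)$ as in \eqref{Bequationsgeneral} (this is Appendix~\ref{sec:expconvdev} of the paper).

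However, there is a concrete gap in your treatment of the $s=r$ boundary terms. The Wronskian \eqref{eq:KIexchange} turns them into an $r$-independent constant only for the $\mu_2$-part of the density. For the $\mu_1$-part the Wronskian still leaves a factor proportional to the profile itself: for $A>0$ the $s=r$ terms give $\frac{\ell^2}{a^2\ell^2+k^2}J_{\frac{n}{2}-1}(ar)$ from $V_\ell$ and $\frac{1}{a^2+k^2}J_{\frac{n}{2}-1}(ar)$ from $V$, so the assembled convolution contains the $r$-dependent term $\mu_1 r^{1-\frac{n}{2}}\bigl(\frac{C\ell^{n}}{a^2\ell^2+k^2}-\frac{1}{a^2+k^2}\bigr)J_{\frac{n}{2}-1}(ar)$, and likewise with $I_{\frac{n}{2}-1}(ar)$ when $A<0$ and with $r^{2}$ when $A=0$. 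No Bessel identity kills these; they vanish only because of the definition of the aggregate parameter, $a^{2}=|A|=k^{2}|1-C\ell^{n}|/|C\ell^{n}-\ell^{2}|$ (equivalently $C\ell^{n}=1$ in the case $A=0$, where in addition the surviving piece of the $\mu_1$ boundary terms, obtained from a recurrence plus \eqref{eq:KIexchange}, is precisely the constant $\frac{2n\mu_1}{k^4}(C\ell^{n+2}-1)$ — a contribution your sketch wrongly attributes to the $\mu_2$ mechanism, and note also that for $A=0$ the constant $\frac{\mu_2}{k^2}(C\ell^{n}-1)$ is zero). Without explicitly performing this cancellation, the computation as you describe it does not land on \eqref{eq:explicitconv}. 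You could alternatively invoke the fourth-order ODE structure \eqref{eq:Wrhosol} to argue a priori that no component along $r^{1-\frac{n}{2}}J_{\frac{n}{2}-1}(ar)$ can survive, but then that appeal must be made explicit, and it still does not by itself produce the stated constants. (A minor slip: in the $A=0$ case the $\mu_1$ integrand is $s^{\frac{n}{2}+2}K_{\frac{n}{2}-1}(ks/\ell)$, not $s^{\frac{n}{2}+1}$.)
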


From now on, the subscripts of $B$ or $\tilde{B}$, that indicate
the sign of $A$, will be omitted when the discussion is relevant
to all three cases (similarly for other variables like the
coefficient matrix $M$ below).

Equipped with these expressions of the convolution, we further
study the existence/non-existence of the flock profile on the
parameter space. As mentioned above, the explicit formulas allow
us to write $\lambda_1$ and $\lambda_2$, by plugging
\eqref{eq:explicitconv} into \eqref{eq:Wrhosol}, in terms of
$\mu_1$, $\mu_2$, and $R$. Since $r^{1-n/2}I_{\frac{n}{2}-1}(kr)$ and
$r^{1-n/2}I_{\frac{n}{2}-1}(kr/\ell)$ are independent,
we deduce the formulas in Table \ref{table1}.

\begin{table}
\begin{center}
\begin{minipage}{0.8\textwidth}
\tabcolsep=8pt
\begin{tabular}{ccc}
\hline \hline
 & $\lambda_1$ & $\lambda_2$\\
\hline
$A>0$ &
$-C\frac{R^{\frac{n}{2}}}{k}\ell^{n-1}B_+(\ell)K_{\frac{n}{2}}(kR/\ell)$
& $\frac{R^{\frac{n}{2}}}{k}\ell^{n-1}B_+(1)K_{\frac{n}{2}}(kR)$
\\
$A=0$ &  $-C R^{\frac{n}{2}}
\ell^{n-1}B_0(\ell)K_{\frac{n}{2}}(kR/\ell)$ & $R^{\frac{n}{2}}
\ell^{n-1}B_0(1)K_{\frac{n}{2}}(kR)$ \\
$A<0$ &
$-C\frac{R^{\frac{n}{2}}}{k}\ell^{n-1}B_-(\ell)K_{\frac{n}{2}}(kR/\ell)$
& $\frac{R^{\frac{n}{2}}}{k}\ell^{n-1}B_-(1)K_{\frac{n}{2}}(kR)$\\
\hline
\end{tabular}
\end{minipage}
\end{center}
\caption{Formulas for $\lambda_1$ and $\lambda_2$ in~\eqref{eq:Wrhosol}
when $\rho$ is given by~\eqref{eq:explicitsol}.
}
\label{table1}
\end{table}

For the flock profile we are interested in, $\lambda_1$ and
$\lambda_2$ must be zero. In view of Table \ref{table1}, this is
equivalent to the conditions $B(\ell)=0, B(1)=0$, since $K_\nu(x)$ is nonzero on
$(0,\infty)$. Therefore, there exists a flock profile only if the
homogeneous equations for $\boldsymbol{\mu} = (\mu_1, \mu_2)^{T}$
\begin{equation}\label{eq:homomu}
 M \boldsymbol{\mu}
= \begin{pmatrix}
   \tilde{B}(\ell) &  1\cr
\tilde{B}(1) &  1
  \end{pmatrix}
\begin{pmatrix}
 \mu_1 \cr \mu_2
\end{pmatrix} =
\begin{pmatrix}
 0 \cr 0
\end{pmatrix}
\end{equation}
are satisfied. These two homogeneous equations, together with the
total unit mass constraint for the non-negative density $\rho$,
determine the three characterizing parameters $(\mu_1,\mu_2,R_F)$
of the flock profile.

A careful examination of the three equations shows that the radius
of the support $R$ is determined by the scalar equation $\det M =
\tilde{B}(\ell)-\tilde{B}(1)=0$, since $\mu_1$ and $\mu_2$ must be
nontrivial solutions of~\eqref{eq:homomu}. In fact, all the
subsequent results are based on studying the roots of $\det M$ and
the properties of $\tilde B(\xi)$ as functions of $R$.
Below we focus on the physical two- and three-dimensional cases, on
the biologically relevant regime $\ell<1,C\ell^{n-2}>1$. However,
unlike the unified derivation of the convolution
to~\eqref{eq:explicitconv}, the existence/non-existence question
is much more complicated and has to be treated separately.

\begin{figure}[ht]
\centering
\subfloat[Results of Section \ref{sec3},
$n=3$]{\includegraphics[keepaspectratio=true,
width=.5\textwidth]{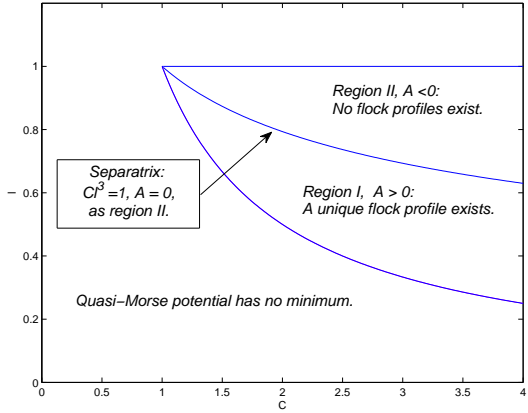}}
\subfloat[Results of Section \ref{sec2d}, $n=2$]
{\includegraphics[keepaspectratio=true,
width=.5\textwidth]{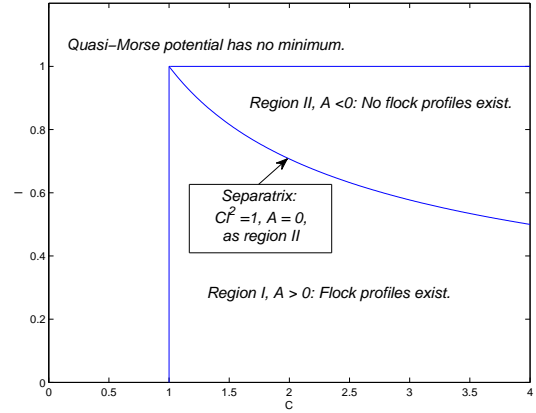}}
\caption{Phase-diagrams of parameters $C,\ell$ for the Quasi-Morse
potential illustrating the combined results of Theorems
\ref{theo3d} and \ref{theo2d}. For both dimensions $n=2,3$ the
aggregate parameter $A$ divides the biologically relevant parameter space $\{(C,\ell)\mid C\ell^{n-2}>1,\ell<1\}$ into
two subregions I and II by the curve $C\ell^n=1$. In region I, $A>0$, a flock profile always
exists. In region II and the separatrix, $A\leq 0$, no flock
profiles exist. When $n=3$, existing flock profiles are
additionally known to be unique.} \label{figresult}
\end{figure}

The main results of this paper (Theorems \ref{theo3d} and
\ref{theo2d}) in the biologically relevant regimes are summarized
in Figure \ref{figresult}. We show the existence and uniqueness of
flock profiles in the 3D case for $A>0$ and non-existence
otherwise. In the 2D case, we show the existence of flock
profiles for $A>0$ and non-existence otherwise. However, we
cannot conclude the uniqueness of the flock profiles. Because of
the connection of the (modified) Bessel functions in three
dimension (and odd dimensions in general) with the well-known
trigonometric functions, we consider this case first.


\section{Existence theory of flock profiles in three dimension}
\label{sec3}

We first turn to the existence theory of flock profiles in three
space dimensions, as in this case the Bessel functions in the
potential as well as in all subsequent computations reduce to
trigonometric functions (see Appendix~\ref{sec:bessel}). The
aggregate potential parameter $A$ is computed as
\begin{equation}
A = k^2\big(1-C\ell^3\big)/\big(C\ell^3-\ell^2\big),
\label{A3d}
\end{equation}
and the expressions \eqref{Bequationsgeneral} used in the explicit convolution \eqref{eq:explicitconv} simplify to
\begin{subequations}\label{eq:3dbb}
\begin{gather}
\tilde{B}_+(\xi) 
= \sqrt{\frac{2}{a\pi}}
\left(1+\frac{a^2\xi^2}{k^2}\right)^{-1}
\Big[\sin aR + \frac{a\xi}{k}\cos aR\Big]\frac{k}{kR+\xi},   \label{eq:3dbb+}\\
\tilde{B}_0(\xi)
= \frac{2\xi}{k^2}\frac{(kR)^2+3kR\xi+3\xi^2}{kR+\xi} + 1,  \label{B03dexpl}\\
\tilde{B}_-(\xi) 
= \sqrt{\frac{2}{a\pi}} \left(1-\frac{a^2\xi^2}{k^2}\right)^{-1}
\Big[\sinh aR + \frac{a\xi}{k}\cosh
aR\Big]\frac{k}{kR+\xi},\label{eq:3dbb-}
\end{gather}
\end{subequations}
as $K_{3/2}(x)/K_{1/2}(x) = 1+1/x$ and $K_{5/2}(x)/K_{3/2}(x) =
(x^2+3x+3)/x(x+1)$. Based on numerical findings, it has been
conjectured in~\cite{Carrillo2013} that flock profiles  can be found only for
Quasi-Morse potentials where $A>0$. The insight from the explicit calculations above enables us
now to prove existence and uniqueness of flock profiles, and thus
to analytically investigate the phase diagram of parameters
$C,\ell$ in the biologically relevant scenarios $C\ell>1, \ell<1$ (see
Figure \ref{figresult}). In fact, the following theorem holds:

\begin{theorem}
Let $W$ be a Quasi-Morse potential in space dimension $n=3$ with parameters within the biologically
relevant regime $C\ell>1,\ell<1$. Then flock profiles exist if and only if $A>0$.
Furthermore, if $A>0$, there exists a unique flock profile.
\label{theo3d}
\end{theorem}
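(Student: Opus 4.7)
The plan is to reduce the flock-profile problem, via Proposition \ref{prop1} and Table \ref{table1}, to a single scalar equation in the support radius $R$. Since $K_{3/2}(x) > 0$ on $(0,\infty)$, requiring $\lambda_1 = \lambda_2 = 0$ in the homogeneous system \eqref{eq:homomu} is equivalent to $\det M = \tilde B(\ell) - \tilde B(1) = 0$; once such an $R$ is found, $(\mu_1,\mu_2)$ is fixed up to a scalar as the corresponding null vector, the scalar is pinned down by the unit-mass constraint, and non-negativity of $\rho$ in \eqref{eq:explicitsol} is then checked a posteriori. I will analyze the scalar equation $f(R):=\tilde B(\ell)-\tilde B(1)=0$ case-by-case in the sign of $A$ given by \eqref{A3d}, using the trigonometric/hyperbolic expressions \eqref{eq:3dbb}.

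For the existence half in the case $A>0$, I would clear denominators in $f_+(R)=\tilde B_+(\ell)-\tilde B_+(1)$ using \eqref{eq:3dbb+} to obtain a linear combination of $\sin(aR)$ and $\cos(aR)$ with coefficients rational in $R$. A Taylor expansion at $R\to 0^+$, using $\sin(aR)\sim aR$ together with the biologically relevant constraints $\ell<1$, $C\ell>1$, $C\ell^3<1$, pins down the sign of $f_+$ near the origin; a sign flip then occurs before the first zero of the bracket $\sin(aR)+(a/k)\cos(aR)$, and the Intermediate Value Theorem yields a root $R_F$. Since $n=3$ gives $J_{1/2}(x)=\sqrt{2/(\pi x)}\sin x$, the density takes the explicit sinc-type form $\rho(r)=\tilde\mu_1\,\sin(ar)/r+\mu_2$, whose sign pattern is readable from the null vector of $M$, so nonnegativity on $[0,R_F]$ can be verified directly. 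Uniqueness of $R_F$ in the admissible interval is the crux: it should follow from a monotonicity argument on $f_+$ combined with the observation that any further zero of $f_+$ would lie beyond the first extremum of the sinc piece of $\rho$ and hence produce a density changing sign.

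The case $A=0$ is immediate from \eqref{B03dexpl}. A direct differentiation shows that $\xi\mapsto \tilde B_0(\xi) = 1 + (2\xi/k^2)[(kR)^2+3kR\xi+3\xi^2]/(kR+\xi)$ is strictly increasing in $\xi>0$ for every fixed $R>0$, so $\ell<1$ immediately gives $\tilde B_0(\ell)<\tilde B_0(1)$, $\det M\neq 0$, and no flock profile. For $A<0$, I would first use \eqref{A3d} and $C\ell^3>1$, $C\ell>1$, $\ell<1$ to check that $a\xi/k<1$ for $\xi\in\{\ell,1\}$, so \eqref{eq:3dbb-} is regular and can be rewritten as
\[
\tilde B_-(\xi) = \sqrt{\tfrac{2}{\pi a}}\,\frac{k}{2(kR+\xi)}\left[\frac{e^{aR}}{1-a\xi/k}-\frac{e^{-aR}}{1+a\xi/k}\right].
\]
A sign analysis of $\tilde B_-(\ell)-\tilde B_-(1)$ from this exponential form, exploiting that the bracket is strictly increasing in $\xi$ while the outer prefactor $k/(kR+\xi)$ is strictly decreasing, should then rule out any common value in the relevant parameter range, or else force the associated density to change sign.

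The main obstacle I anticipate is the uniqueness clause for $A>0$: the transcendental function $f_+(R)$ can a priori have several roots, and excluding all but one in the admissible positivity interval requires delicate use of the constraints $\ell<1$ and $C\ell^3<1$ to control the competition between the $\sin(aR)$, $\cos(aR)$, and rational-in-$R$ terms. By contrast, the $A=0$ step is a one-line monotonicity observation, and the $A<0$ case reduces to a similar but hyperbolic sign analysis which benefits from the absence of oscillation.
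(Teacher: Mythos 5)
Your overall reduction (flock profiles $\Leftrightarrow$ nontrivial null vector of $M$ $\Leftrightarrow$ roots of $\det M=\tilde B(\ell)-\tilde B(1)$, then check positivity of $\rho$) is the same as the paper's, and your $A=0$ step is correct and complete — in fact your direct monotonicity of $\xi\mapsto\tilde B_0(\xi)$ from \eqref{B03dexpl} is a slightly more elementary route than the paper's general-dimension argument via Lemma \ref{lem:monbesk}. The first genuine gap is the case $A<0$: after your (correct) exponential rewriting of \eqref{eq:3dbb-}, the bracket is increasing in $\xi$ while the prefactor $k/(kR+\xi)$ is decreasing, so the "structural" monotonicity you invoke pulls in opposite directions and cannot determine the sign of $\tilde B_-(\ell)-\tilde B_-(1)$; the sign genuinely depends on the quantitative relation $a^2=k^2(C\ell^3-1)/(C\ell^3-\ell^2)$. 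The paper resolves this by factoring $\det M_-$ into a positive multiple of $f_-(R)=\tfrac{a\ell}{k}(1-C\ell^3)+kR(1-C\ell^3)\tanh aR+(\ell-C\ell^3)aR+(1-C\ell^4)\tanh aR$ and showing $f_-<0$ for all $R>0$ using $C\ell^3>1$, $C\ell>1$, $\ell<1$ and $\tanh aR\le aR$. Your fallback clause ("or else force the associated density to change sign") is not developed, and non-existence requires ruling out \emph{every} root, so as written this case is not proved.

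The second gap is the heart of the theorem, the existence-and-uniqueness clause for $A>0$, which you yourself flag as unresolved. Existence of some root of $\det M_+$ is indeed easy (alternating signs at the zeros $\tilde R_j$ of $\cos aR$ plus the intermediate value theorem; your claim that the sign flip happens before the first zero of $\sin aR+\tfrac{a}{k}\cos aR$ is unsubstantiated and unnecessary). But uniqueness of the flock profile requires three nontrivial steps that your sketch does not supply: (i) exactly one root $R^*_j$ in each $(\tilde R_j,\tilde R_{j+1})$ and none on $(0,\tilde R_1)$ — the paper gets this by writing $\det M_+/\cos aR$ as a positive factor times $\tan aR+g(R)$ with $g$ as in \eqref{eq:gfun1} and proving $g'(R)>-a$, so the sum is strictly increasing on each period; (ii) elimination of every root with $j\ge 2$, via the bound $|\tilde B_+(1)|_{R=R^*_j}|<\sqrt{8a/(9\pi^3)}<\sqrt{2a/\pi}$, which forces $\rho(0)\rho(\tilde R_2)<0$; and (iii) strict positivity of the density for the first root, which needs $\tilde B_+(1)|_{R=R^*_1}<0$ (hence $\mu_1,\mu_2>0$), the comparison $R^*_1<\bar r_1$ with the first minimum of $\sin(ar)/r$ (obtained from $\tan a\bar r_1+g(\bar r_1)>0$ and the monotonicity in (i)), and an explicit evaluation showing $\rho(R^*_1)>0$. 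The "a posteriori" nonnegativity check you defer is precisely where most of the work lies, so the proposal is a correct framing of the problem but not a proof of the theorem.
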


To prove Theorem \ref{theo3d},
we begin with the discussion of the non-existence of flock profiles for $A\leq 0$.

\begin{proof}(Theorem \ref{theo3d}, Non-existence for $A\leq 0$)
When $A=0$, for all $R$, we can show $\det M_0 = \tilde{B}_0(\ell) - \tilde{B}_0(1) < 0$
by a straightforward explicit computation
using \eqref{B03dexpl}. We skip that calculation here as the case
$A=0$ will also be proven in general dimensions in Theorem
\ref{theo2d}.

Next, suppose that $A<0$. From \eqref{A3d}, this
implies $C\ell^3>\ell$ as $C\ell>1, \ell<1$ and furthermore, we have $a^2 = -A
=k^2(1-C\ell^3)/(\ell^2-C\ell^3)$.
%
The determinant of $M_-$ simplifies to
\begin{align*}
\det M_- =& \, \tilde{B}_-(\ell)-\tilde{B}_-(1)=
\sqrt{\frac{2}{\pi aR^2}}\frac{\ell^2(C\ell-1)}{1-\ell^2} \,\,\,\cdot \\
&\left[
\left(\frac{1}{C\ell^3}\frac{kR}{kR+\ell}-\frac{kR}{kR+1}\right)\sinh
aR
+\frac{a}{k}\left(\frac{1}{C\ell^2}\frac{kR}{kR+\ell}
-\frac{kR}{kR+1}\right) \cosh aR
\right] \cr
= &\,\sqrt{\frac{2}{\pi a}}\frac{k\ell^2(C\ell-1)}{1-\ell^2}
\frac{\cosh aR}{C\ell^3(kR+\ell)(kR+1)}f_-(R),
\end{align*}
where
\begin{equation}
f_-(R) = \frac{a\ell}{k}(1-C\ell^3)+kR(1-C\ell^3)\tanh aR +
(\ell-C\ell^3)aR + (1-C\ell^4)\tanh aR.
\label{fmR}
\end{equation}
Clearly, the sign of $\det M_-$ is determined by the sign of
$f_-(R)$. The first two terms in \eqref{fmR} are negative. If
$C\ell^4>1$, the last two terms are both negative as
$C\ell^3>1\Rightarrow C\ell^3>\ell$. If to the contrary $C\ell^4\leq 1$, the
sum of the last two terms in \eqref{fmR} satisfies
\[
(\ell-C\ell^3)aR + (1-C\ell^4)\tanh aR
< (1+\ell-C\ell^3-C\ell^4)aR
= (1+\ell)(1-C\ell^3)aR < 0,
\]
as $\tanh aR \leq aR$.
Thus $\det M_-<0$ for all $R>0$ and there is no real positive root
of $\det M_-$.
\end{proof}

\begin{figure}[htp]
 \begin{center}
\includegraphics[totalheight=0.25\textheight]{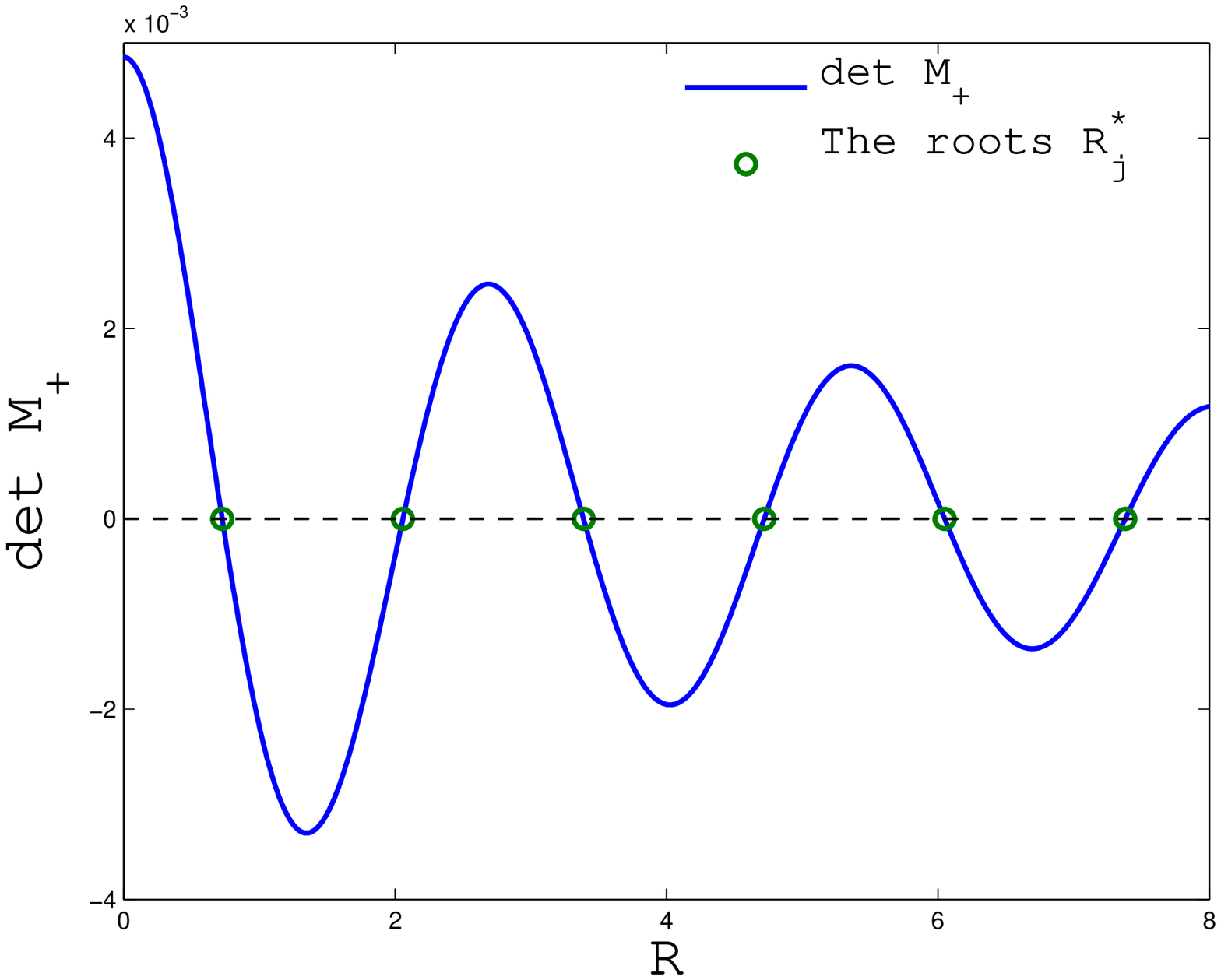}
$~$
\includegraphics[totalheight=0.25\textheight]{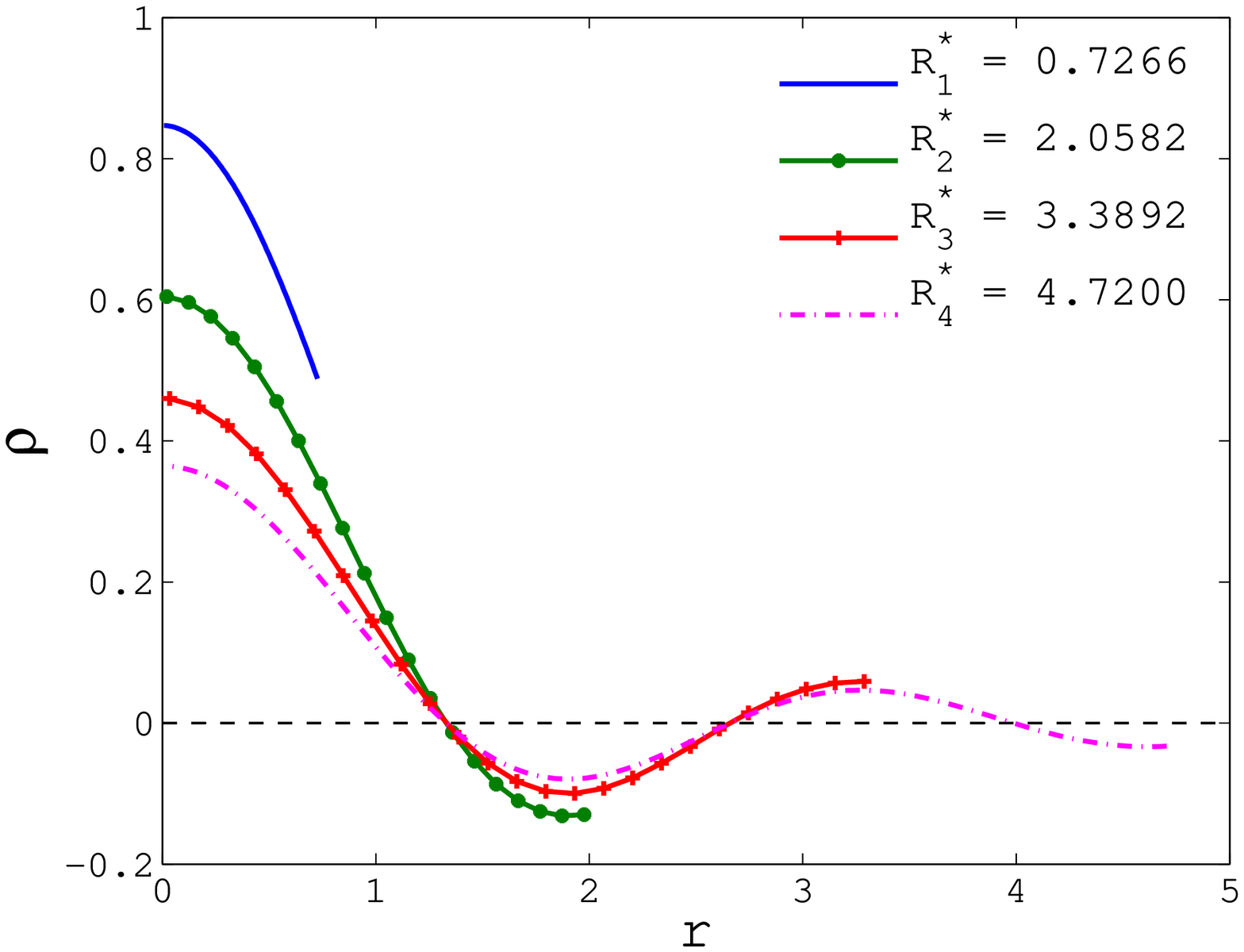}
 \end{center}
\caption{Multiple zeros $R^*$ of the equation
$\det M_+=0$ (left)
and the corresponding densities (right). Only the first zero
$R_1^*$ gives rise to strict positive density $\rho(r)$ on
the support. Here the parameters $C=1.255, \ell = 0.8,
k=0.2, A = 5.585$ (or $a=2.362$) are the same as in~\cite{Carrillo2013}.}
\label{fig:3ddetden}
\end{figure}

Proving existence of a unique flock profile when $A>0$ is more
difficult and relies on various properties of the  trigonometric
representation of the original half-integer order Bessel
functions. Our goal is to show that  $\det M_+$ is oscillatory
with decaying amplitude, implying the existence of infinitely many
positive roots $R^*_j$, $j=1,2,\dots$, for $\det M_+=0$. However,
only the first positive root gives rise to a strictly positive
density on the support $[0,R^*_1]$, and the density for any of the
other roots must be negative somewhere on the support $[0,R^*_j]$,
$j\ge 2$. This asserted behaviour of $\det M_+$ for $R>0$ is
illustrated in Figure~\ref{fig:3ddetden} with particular parameters
taken from~\cite{Carrillo2013}.

\begin{proof}(Theorem \ref{theo3d}, Existence and uniqueness for $A>0$.)
The proof is separated into several steps.

1. \emph{There are infinitely many positive roots for $\det
M_+=0$}. From \eqref{eq:3dbb+}, the determinant $\det M_+ =
\tilde{B}_+(\ell) - \tilde{B}_+(1)$ can be written as
\begin{align}
 \operatorname{det } M_+
=& \,k\sqrt{\frac{2}{a\pi}} \left(
\frac{1}{(1+a^2\ell^2/k^2)(kR+\ell)} -\frac{1}{(1+a^2/k^2)(kR+1)}
\right)\sin aR \nonumber \\ &+ \sqrt{\frac{2a}{\pi}}\left(
\frac{\ell}{(1+a^2\ell^2/k^2)(kR+\ell)}
-\frac{1}{(1+a^2/k^2)(kR+1)} \right)\cos aR . \label{eq:3dmp1}
\end{align}
We observe that the coefficient of $\sin aR$ in the expression
above is positive, since $ (1+a^2\ell^2/k^2)^{-1}>
(1+a^2/k^2)^{-1}$ and $(kR+\ell)^{-1}>(kR+1)^{-1}$. Evaluating
$\det M_+$ at $\tilde{R}_j=(j-1/2)\pi/a, j=1,2,\cdots$, the roots
of $\cos aR$, we deduce that
\begin{equation*}
\det M_+ \Big|_{R=\tilde{R}_j} = (-1)^{j}k\sqrt{\frac{2}{a\pi}}
\left( \frac{1}{(1+a^2\ell^2/k^2)(k\tilde{R}_j+\ell)}
-\frac{1}{(1+a^2/k^2)(k\tilde{R}_j+1)} \right)
\end{equation*}
has alternating signs. Therefore, there is at least one root
between $(\tilde{R}_j,\tilde{R}_{j+1})$, proving the existence of
infinitely many positive roots for $\det M_+=0$.

2. \emph{The function $\det M_+$ has no root on $(0,\tilde{R}_1)$
and has a unique root $R^*_j$ on $(\tilde{R}_j,\tilde{R}_{j+1})$,
$j=1,2,\cdots$}. We write $\det M_+$ in the following form,
\begin{align*}
 \frac{\mathrm{det } M_+}{\cos aR} &= k\sqrt{\frac{2}{a\pi}}
\left( \frac{1}{(1+a^2\ell^2/k^2)(kR+\ell)}
-\frac{1}{(1+a^2/k^2)(kR+1)} \right)\Big(\tan aR+ g(R)\Big) ,
\end{align*}
where
\begin{subequations}
\begin{align}
g(R) &= \frac{a}{k}
\frac{ (a^2\ell-k^2)kR+a^2\ell(\ell+1)}
{ a^2(\ell+1)kR+k^2+a^2(\ell^2+\ell+1)} \label{eq:gfun1}\\
&=  \frac{a}{k}
\left[ \frac{a^2\ell-k^2}{a^2(\ell+1)}
+ \frac{(k^2+a^2)(k^2+a^2\ell^2)}{
a^2(\ell+1)\big(a^2(\ell+1)kR+k^2+a^2(\ell^2+\ell+1)\big)
}
\right].
\end{align}
\end{subequations}
It is easy to check that the roots of $\det M_+ = 0$ are the same
as the roots of $\tan aR + g(R) = 0$, and this auxiliary function $g$
is used to show various estimates in various stages of the proof below.
Notice now that the function $\tan aR + g(R)$ is strictly increasing on
$(\tilde{R}_j,\tilde{R}_{j+1})$, since $\frac{d}{dR} \tan aR \geq
a$ and
\[
 g'(R) > g'(0) =
-a\frac{ (k^2+a^2)(k^2+a^2\ell^2)}{
\big(k^2+a^2(\ell^2+\ell+1) \big)^2}
>-a.
\]
Combining this with the fact that
\[
\lim_{R\to \tilde{R}_j^\mp}  \Big(\tan aR + g(R)\Big) =
\pm\infty,
\]
we obtain that there is a unique root $R^*_j$ on
$(\tilde{R}_j,\tilde{R}_{j+1})$, as illustrated in Figure~\ref{fig:3dill}(a). There is no positive root on
$(0,\tilde{R}_1)$, because $\det M_+$ is an increasing function on $(0,\tilde{R}_1)$
and
$$
\det M_+|_{R=0} = \sqrt{\frac{2a}{\pi}}\Big(
(1+a^2\ell^2/k^2)^{-1}-(1+a^2/k^2)^{-1}\Big)>0.
$$

\begin{figure}[htp]
    \begin{center}
\subfloat[The intersection of $\tan aR$ with $-g(R)$]{\includegraphics[keepaspectratio=true,
width=.5\textwidth]{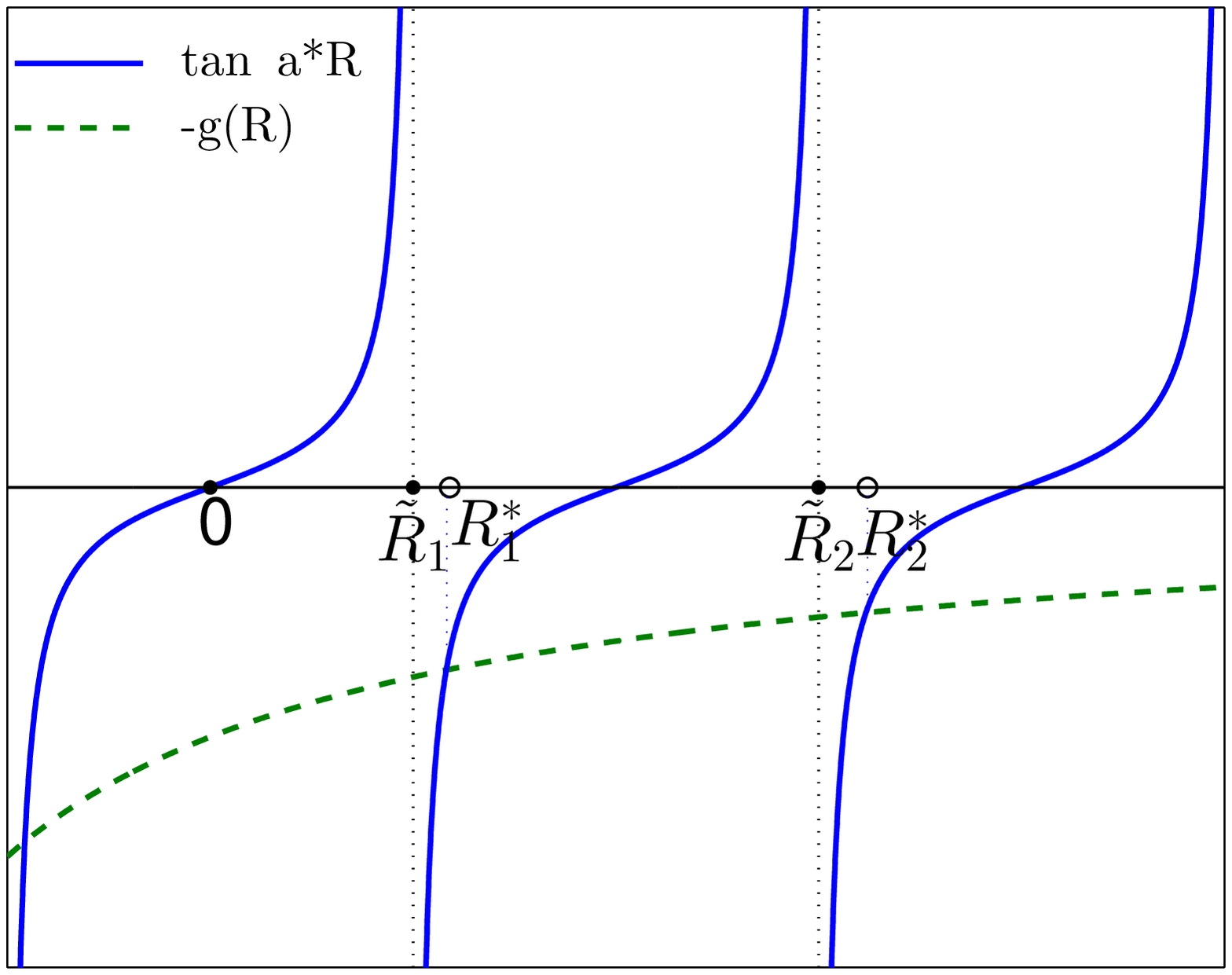}}
\subfloat[The densities corresponding to $R_1^*$ and $R_2^*$]
{\includegraphics[keepaspectratio=true,
width=.5\textwidth]{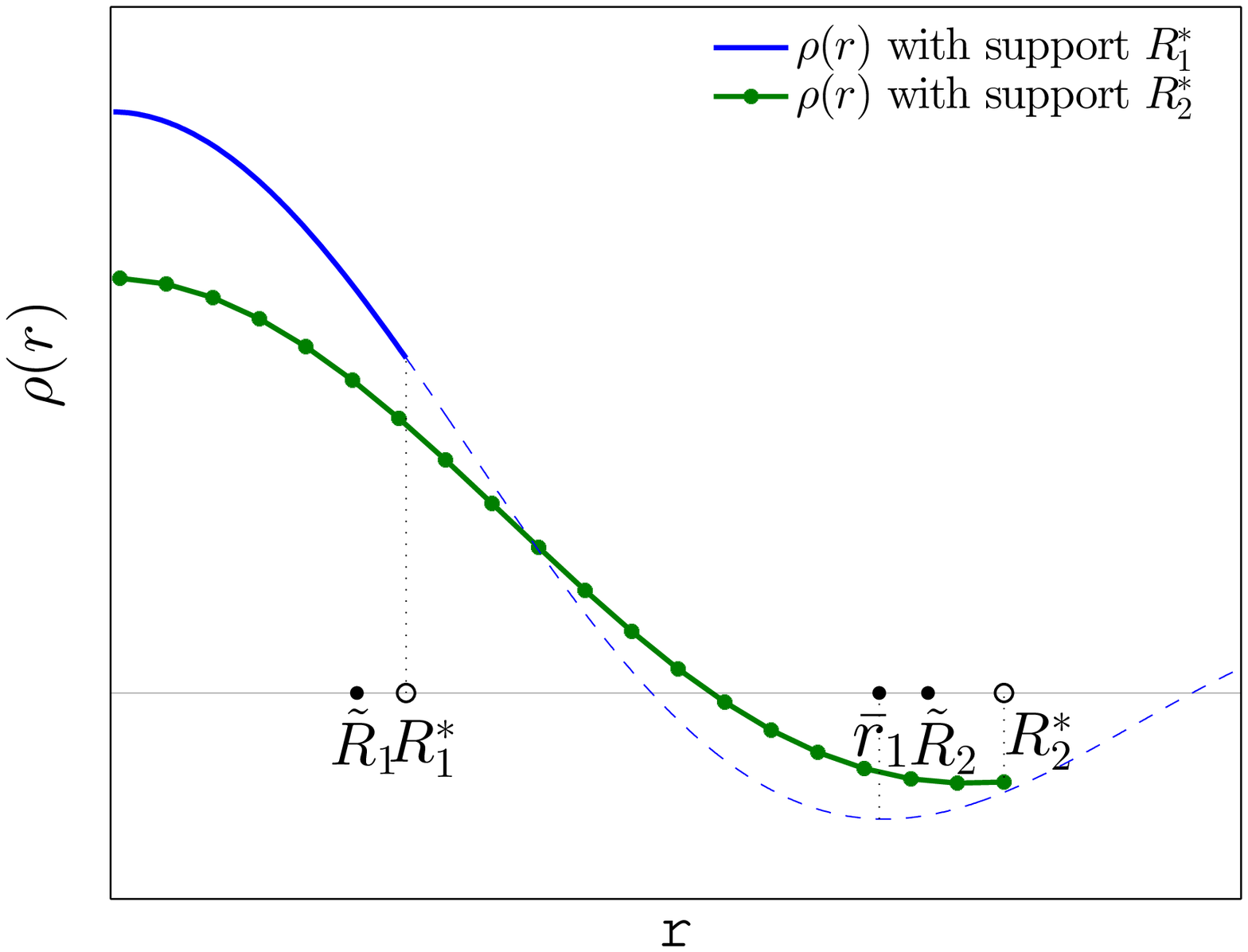}}
\end{center}
    \caption{Illustrations of the generic properties proved in the
        three dimensions when $A>0$: (a) $\tan aR$ and $g(R)$ intersects
        only once at $R_j^*$ in the interval $[\tilde{R}_j,\tilde{R}_{j+1})$;
        (b) The density $\rho(r)$ with support $R_j^*, j\geq 2$
        has opposite signs at the origin and at $\tilde{R}_2$ while 
        that with $R_1^*$ is monotonically
        decreasing from the origin.}
    \label{fig:3dill}
\end{figure}

3. \emph{If $j\ge 2$ then the density corresponding to the root
$R^*_j$ can not be both positive at the origin and at
$\tilde{R}_2$.} Let $\boldsymbol{\mu}=(\mu_1,\mu_2)^T$ be the
(nontrivial) solution of $M_+\big|_{R=R^*_j}\boldsymbol{\mu}=0$,
then the corresponding density is given by
$$
\rho(r) =\mu_1 r^{-1/2}J_{1/2}(ar) + \mu_2
=\mu_1\bigg(\sqrt{\frac{2}{a\pi}}\frac{\sin ar}{r}-\tilde{B}_+(1)\big|_{R=R^*_j}\bigg).
$$
A direct evaluation of $\rho$ leads to
\[
 \rho(0)\rho(\tilde{R}_2) =
 \left( \sqrt{\frac{2a}{\pi}}-\tilde{B}_+(1)\Big|_{R=R^*_j}\right)
 \left( -\sqrt{\frac{8a}{9\pi^3}}-\tilde{B}_+(1)\Big|_{R=R^*_j}\right)
\mu_1^2.
\]
Using~\eqref{eq:3dbb+} and the inequality $|\sin aR +
\frac{a\xi}{k}\cos aR| \leq (1+\frac{a^2\xi^2}{k^2})^{1/2}$, we
get
\begin{align*}
 |\tilde{B}_+(\xi)|
&= \left|\sqrt{\frac{2}{a\pi}} \left(1+\frac{a^2\xi^2}{k^2}\right)^{-1}
\Big[\sin aR +
\frac{a\xi}{k}\cos aR\Big]\frac{k}{kR+\xi} \right| \\
& \leq \sqrt{\frac{2}{a\pi}} \left(1+\frac{a^2\xi^2}{k^2}\right)^{-1/2}
\frac{k}{kR+\xi}.
\end{align*}
Therefore, since $R^*_j>\tilde{R}_2$,
\begin{align*}
 \Big|\tilde{B}_+(1)\big|_{R=R^*_j}\Big|
& \leq \sqrt{\frac{2}{a\pi}}\left(1+\frac{a^2}{k^2}\right)^{-1/2}\frac{k}{kR^*_j+1} <
\sqrt{\frac{2}{a\pi}}\frac{1}{\tilde{R}_2}
=\sqrt{\frac{8a}{9\pi^3}} < \sqrt{\frac{2a}{\pi}} .
\end{align*}
These estimates imply that $\rho(0)\rho(\tilde{R}_2)<0$, while the
physical density $\rho$ must be nonnegative on the support.

4. \emph{The density $\rho(r)$ corresponding to the root $R^*_1$
is decreasing and strictly positive on its support $[0,R^*_1]$}. Let us first
show that $\tilde{B}_+(\ell)|_{R=R^*_1}
=\tilde{B}_+(1)|_{R=R^*_1}<0$. Assume that this is not the case,
then $\tilde{B}_+(\ell)|_{R=R^*_1} =\tilde{B}_+(1)|_{R=R^*_1}\geq
0$. Since $\cos aR<0$ for $R\in (\tilde{R}_1,\tilde{R}_2)$, then
\[
\sin aR^*_1 + \frac{a\ell}{k}\cos aR^*_1 > \sin aR^*_1 +
\frac{a}{k}\cos aR^*_1 \geq 0.
\]
This, together with $(1+a^2\ell^2/k^2)^{-1}> (1+a^2/k^2)^{-1}$ and
$(kR^*_1+\ell)^{-1}>(kR^*_1+1)^{-1}$, implies that
$\tilde{B}_+(\ell)|_{R=R^*_1}>\tilde{B}_+(1)|_{R=R^*_1}\geq 0$,
leading to a contradiction. Therefore, combining
$\tilde{B}_+(1)|_{R=R^*_1}<0$ with the fact that
$\mu_2=-\tilde{B}_+(1)|_{R=R^*_1}\mu_1$ and
$\rho(0)=\sqrt{\frac{2a}{\pi}}\mu_1+\mu_2>0$,  both $\mu_1$ and
$\mu_2$ must be positive.

It is easy to check that $r^{-1/2}J_{1/2}(ar) =
\sqrt{\frac{2}{a\pi}}\frac{\sin ar}{r}$ is a decreasing function
till its first local minimum $\bar{r}_1$, determined by
\[
 0=\left.\frac{d}{dr} r^{-1/2}J_{1/2}(ar)
\right|_{r=\bar{r}_1} = \left.\sqrt{\frac{2}{a\pi}}
\frac{ar \cos ar -\sin ar}{r^2}
\right|_{r=\bar{r}_1} ,
\]
or equivalently $a\bar{r}_1 = \tan a\bar{r}_1>0$ with $\bar{r}_1
\approx 4.49/a \in (\tilde{R}_1,\tilde{R}_2)$. Using the definition~\eqref{eq:gfun1}
of $g$,
\[
\tan a\bar{r}_1 + g(\bar{r}_1) = a\bar{r}_1 +
g(\bar{r}_1) =
\frac{a^3}{k} \frac{(\ell+1)(1+k\bar{r}_1)(\ell+k\bar{r}_1)}{
a^2(\ell+1)k\bar{r}_1+k^2+a^2(\ell^2+\ell+1)}>0.
\]
Since $R^*_1$ is the
unique root of the strictly increasing function $\tan aR +g(R)$
on the interval $(\tilde{R}_1,\tilde{R}_2)$, the fact that $\tan
a\bar{r}_1 + g(\bar{r}_1) >0$ implies that $\bar{r}_1>R^*_1$.
Therefore, the density $\rho(r)$ is a decreasing function on
$[0,R^*_1]$, as illustrated in Figure~\ref{fig:3dill}(b). Finally, evaluating $\rho(r)$ at the boundary
$R=R^*_1$, we get
\begin{align*}
\rho(R^*_1) &= \mu_1\left( \sqrt{\frac{2}{a\pi}}\frac{\sin
aR^*}{R^*_1}-\tilde{B}_+(1)\Big|_{R=R^*_1} \right) \cr &=
-\mu_1\sqrt{\frac{2}{a\pi}}\left(1+\frac{a^2}{k^2}\right)^{-1}
\left[ \frac{a}{kR^*_1+1}+
\left(\frac{k}{kR^*_1+1}-\frac{1}{R^*_1}\left(1+\frac{a^2}{k^2}\right)
\right)\tan aR^*_1 \right]\cos aR^*_1 \cr &=
-\mu_1\sqrt{\frac{2}{a\pi}}\left(1+\frac{a^2}{k^2}\right)^{-1}
\left[ \frac{a}{kR^*_1+1}-
\left(\frac{k}{kR^*_1+1}-\frac{1}{R^*_1}\left(1+\frac{a^2}{k^2}\right)
\right)g(R^*_1) \right]\cos aR^*_1 \cr &=
-\mu_1\sqrt{\frac{2a}{\pi}} \frac{1}{kR^*_1}
\frac{1+\ell+kR^*_1}{a^2(\ell+1)kR^*_1+k^2+a^2(\ell^2+\ell+1)}
\cos aR^*_1
>0.
\end{align*}
This shows that $\rho(R^*_1)>0$, and therefore $\rho(r)$ is
strictly positive on its support, which completes the proof.
\end{proof}


\section{Existence theory of flock profiles in two dimension}
\label{sec2d}
We now turn our attention to two space dimensions, where the involved Bessel functions do not reduce to standard trigonometric expressions.
For $n=2$,
\begin{equation}
A = k^2(1-C\ell^2)/(C-1)\ell^2,
\label{A2d}
\end{equation}
 and
\begin{subequations}
\begin{gather}
    \tilde{B}_+(\xi) = \left(1+\frac{a^2\xi^2}{k^2}\right)^{-1}
    \left[
    J_0(aR)-\frac{a\xi}{k}J_1(aR)\frac{K_0(kR/\xi)}{K_1(kR/\xi)}
    \right], \label{eq:2db+}\\
    \tilde{B}_0(\xi) =
    \frac{2\xi}{k}R\frac{K_{5/2}(kR/\xi)}{K_{3/2}(kR/\xi)}+1,
    \label{eq:2db0}\\
    \tilde{B}_-(\xi) = \left(1-\frac{a^2\xi^2}{k^2}\right)^{-1}
    \left[
    I_0(aR)+\frac{a\xi}{k}I_1(aR)\frac{K_0(kR/\xi)}{K_1(kR/\xi)}
    \right]. \label{eq:2db-}
\end{gather}
\end{subequations}
The numerical investigations carried out in \cite{Carrillo2013} led to the assertion that flock profiles can only be found when $A>0$.
As in the three-dimensional case, we can now give a rigorous theorem and proof thanks to the explicit computations of Section \ref{sec2}.

\begin{theorem} \label{theo2d}
Let $W$ be a Quasi-Morse potential in space dimension $n=2$ with
parameters within the biologically relevant regime $C>1,\ell<1$.
Then flock profiles exist if and only if $A>0$ or equivalently
$C\ell^2 <1$.
\end{theorem}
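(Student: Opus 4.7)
The plan is to mirror the structure of Theorem \ref{theo3d}: analyze the scalar root equation $\det M(R)=\tilde B(\ell)-\tilde B(1)=0$ in each of the three sign regimes of $A$, which by \eqref{A2d} correspond to the sign of $1-C\ell^2$ inside the biologically relevant region $C>1,\,\ell<1$. The conceptual difference from the three-dimensional setting is that $J_0,J_1,I_0,I_1,K_0,K_1$ are not elementary, so the explicit trigonometric and hyperbolic manipulations of Section \ref{sec3} must be replaced by monotonicity, asymptotic, and sign arguments based on classical Bessel identities.

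For non-existence when $A\le 0$, I would first dispose of the case $A=0$ in general dimension $n\ge 2$, as anticipated in the proof of Theorem \ref{theo3d}. Writing $\det M_0$ via \eqref{Bequationsgeneral} as the difference of the map $\xi\mapsto (2\xi R/k)K_{n/2+1}(kR/\xi)/K_{n/2}(kR/\xi)$ at $\xi=1$ and $\xi=\ell$, I would recast the difference in terms of $y=kR/\xi$ and show the resulting function of $y$ is strictly monotone, using the derivative formulas and the recurrence $K_{\nu+1}(y)=K_{\nu-1}(y)+(2\nu/y)K_\nu(y)$; this yields $\det M_0\neq 0$ for all $R>0$. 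For $A<0$, i.e.\ $C\ell^2>1$, I would start from \eqref{eq:2db-}, compute $\det M_-$, factor out the common positive denominators $(1-a^2\ell^2/k^2)^{-1}$ and $(1-a^2/k^2)^{-1}$ (both positive by direct algebra using $C>1,\ell<1$) together with $I_1(aR)$, and reduce the sign to a bracket depending only on the ratios $I_0(aR)/I_1(aR)$ and $K_0(kR/\xi)/K_1(kR/\xi)$. Monotonicity and positivity of these ratios, combined with $C\ell^2-1>0$, $C-1>0$, and $\ell<1$, should force the bracket to be strictly negative for all $R>0$.

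For existence when $A>0$, equivalently $C\ell^2<1$, the plan is threefold. First, compute $\lim_{R\to 0^+}\det M_+=(1+a^2\ell^2/k^2)^{-1}-(1+a^2/k^2)^{-1}>0$ from \eqref{eq:2db+}, using $J_0(aR)\to 1$, $J_1(aR)\to 0$, and $K_0/K_1\to 0$. Second, exhibit an $R_0>0$ with $\det M_+(R_0)<0$ by exploiting the oscillation of $J_0$ and $J_1$: evaluate $\det M_+$ either at a zero of $J_0$ between the first two positive zeros of $J_1$, or at a zero of $J_1$ between the first two zeros of $J_0$, and use that the coefficient of $J_1(aR)$ in $\tilde B_+(\xi)$ is monotone in $\xi$ so that the $\xi=\ell$ and $\xi=1$ contributions do not cancel. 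The intermediate value theorem then supplies a first positive root $R^*$. Finally, verify that $\rho(r)=\mu_1 J_0(ar)+\mu_2$ with $\mu_2=-\tilde B_+(1)\big|_{R=R^*}\mu_1$ is strictly positive on $[0,R^*]$, following Step~4 of the 3D proof: pin down the signs of $\mu_1,\mu_2$ from $\rho(0)=\mu_1+\mu_2>0$; show $R^*$ lies before the first extremum of $J_0$ so that $\rho$ is monotonically decreasing on $[0,R^*]$; and evaluate $\rho(R^*)$ directly to confirm $\rho(R^*)>0$.

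The main obstacle will be the non-existence step for $A<0$: in three dimensions the elementary inequality $\tanh aR\le aR$ reduced the sign of $\det M_-$ to a polynomial comparison in $C,\ell$, whereas in two dimensions the analogous role must be played by sharper, non-elementary monotonicity bounds on $I_0/I_1$ and $K_0/K_1$ that have to be combined into a single sign-definite estimate uniform in $R$. A secondary difficulty appears in the positivity step of the existence argument: because $J_0$ oscillates, pinning $R^*$ inside the strictly decreasing branch of $J_0$ requires a careful comparison between the first root of $\det M_+$ and the first extremum of $J_0$, rather than relying on the explicit form $(\sin ar)/r$ that worked in three dimensions. This last difficulty likely also accounts for why uniqueness, available in 3D, is not asserted here.
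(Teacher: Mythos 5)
Your proposal follows essentially the same route as the paper's proof: non-existence for $A\le 0$ by showing $\det M$ is sign-definite through the monotonicity of the modified Bessel ratios $K_{\nu+1}(x)/\bigl(xK_\nu(x)\bigr)$ (the $A=0$ case in general dimension, and the $A<0$ case after factoring the explicit parameter-dependent coefficients), and existence for $A>0$ by a sign-alternation/intermediate-value argument at the zeros of $J_1(aR)$, taking the first root $R^*\in(0,\tilde R_1)$, and then proving positivity of $\rho$ via $\tilde B_+(1)|_{R=R^*}<0$, monotone decrease of $J_0(ar)$ on $[0,R^*]$, and a direct evaluation showing $\rho(R^*)>0$. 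The only point you understate is that the needed monotonicity of the Bessel ratios is not a quick consequence of the recurrence relations: the paper isolates it as Lemma~\ref{lem:monbesk} and proves it through a Riccati-type ODE argument with asymptotic boundary behaviour at $0$ and $\infty$, whereas your secondary worries (sharper bounds on $I_0/I_1$, and locating $R^*$ before the first extremum of $J_0$) turn out to be unnecessary, since positivity of $I_0/I_1$ suffices and the first sign change of $\det M_+$ automatically occurs before $\tilde R_1$.
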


We begin by proving a general monotonicity result on the ratio of
two modified Bessel functions, which will be used repeatedly
throughout the section.

\begin{lemma} For any $\nu\geq 0$, the functions
    $K_{\nu+1}(x)/\big(xK_\nu(x)\big)$,
    $K_\nu(x)/\big(xK_{\nu+1}(x)\big)$ and $K_{\nu+1}(x)/K_\nu(x)$
are strictly decreasing functions on $(0,\infty)$.
    \label{lem:monbesk}
\end{lemma}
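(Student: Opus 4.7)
The plan is to reduce the three monotonicity statements, as far as possible, to a Turán-type inequality for $K_\nu$, and to handle the one statement that resists this reduction by an integral-representation/variance argument. Throughout I would use the recurrence $K'_\nu(x)=(\nu/x)K_\nu(x)-K_{\nu+1}(x)$, the differentiation identities $(x^{-\nu}K_\nu(x))'=-x^{-\nu}K_{\nu+1}(x)$ and $(x^\nu K_\nu(x))'=-x^\nu K_{\nu-1}(x)$ collected in the Bessel appendix, and the integral representation $K_\nu(x)=\int_0^\infty e^{-x\cosh t}\cosh(\nu t)\,dt$, valid for $\nu\geq 0$ and $x>0$.

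For the first two claims I would rewrite each ratio in a form that collapses under the quotient rule. Using $K_{\nu+1}(x)/(xK_\nu(x))=(x^{-\nu-1}K_{\nu+1}(x))/(x^{-\nu}K_\nu(x))$ and the differentiation rules above,
\[
\frac{d}{dx}\left(\frac{K_{\nu+1}(x)}{xK_\nu(x)}\right)=\frac{K_{\nu+1}(x)^2-K_\nu(x)K_{\nu+2}(x)}{xK_\nu(x)^2},
\]
and the parallel rewriting $K_\nu(x)/(xK_{\nu+1}(x))=(x^\nu K_\nu(x))/(x^{\nu+1}K_{\nu+1}(x))$ yields the analogous formula with numerator $K_\nu(x)^2-K_{\nu-1}(x)K_{\nu+1}(x)$. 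Both numerators are Turán determinants, so the first two claims follow once I establish $K_\mu(x)^2<K_{\mu-1}(x)K_{\mu+1}(x)$ on $(0,\infty)$ for every $\mu\geq 0$. This is classical; I would include a short proof for completeness, symmetrizing the double integral representing $K_{\mu-1}K_{\mu+1}-K_\mu^2$ and using product-to-sum identities to reduce its integrand to $\cosh(\mu(s+t))(\cosh(s-t)-1)+\cosh(\mu(s-t))(\cosh(s+t)-1)$, which is manifestly nonnegative and strictly positive off a set of measure zero.

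For the third claim the direct computation produces a numerator $K_{\nu+1}^2-K_\nu^2-(2\nu+1)K_\nu K_{\nu+1}/x$, whose sign is not directly controlled by Turán because of the linear cross-term. Instead I would use the recurrence to write
\[
\frac{K_{\nu+1}(x)}{K_\nu(x)}=\frac{\nu}{x}-\frac{K'_\nu(x)}{K_\nu(x)}=\frac{\nu}{x}+\mathbb{E}_x[\cosh T],
\]
where $\mathbb{E}_x$ is expectation under the probability measure $p_x(dt)=K_\nu(x)^{-1}e^{-x\cosh t}\cosh(\nu t)\,dt$ on $(0,\infty)$, a genuine positive measure since $\cosh(\nu t)>0$ for $\nu\geq 0$. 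Differentiation under the integral sign gives $\partial_x\mathbb{E}_x[\cosh T]=-\mathrm{Var}_x(\cosh T)$, which is strictly negative because $\cosh T$ is not $p_x$-a.s.\ constant; the contribution $\nu/x$ is also (weakly) decreasing, and the two combine to give strict monotonicity.

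The main obstacle is the third claim: the extra $(2\nu+1)/x$ term in the algebraic derivative is precisely the (positive) log-convexity defect of $K_\nu$ in $x$, and to recognise it as nonnegative one really does need to interpret $-K'_\nu/K_\nu$ as a positive-measure expectation via the integral representation. Once this probabilistic viewpoint is in hand the variance inequality is immediate, and the three assertions of the lemma are handled uniformly as two consequences of Turán and one of log-convexity.
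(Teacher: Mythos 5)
Your proposal is correct, but it takes a genuinely different route from the paper. The paper turns each ratio $w$ into a Riccati-type ODE (e.g.\ $2(\nu+1)w+xw'-x^2w^2+1=0$ for $w=K_{\nu+1}/(xK_\nu)$), reads off $w>0$, $w'<0$ near $0$ and near $\infty$ from the asymptotic expansions of $K_\nu$, and excludes interior critical points because $w'(x_0)=0$ forces $w''(x_0)>0$; the same scheme is repeated for the other two ratios. You instead reduce the first two claims, via the quotient rule applied to $x^{-\nu-1}K_{\nu+1}/\bigl(x^{-\nu}K_\nu\bigr)$ and $x^{\nu}K_{\nu}/\bigl(x^{\nu+1}K_{\nu+1}\bigr)$, to the strict Tur\'an inequality $K_\mu^2<K_{\mu-1}K_{\mu+1}$, proved by symmetrizing the double integral coming from $K_\mu(x)=\int_0^\infty e^{-x\cosh t}\cosh(\mu t)\,dt$ — your derivative formulas and the symmetrized integrand $\cosh(\mu(s+t))(\cosh(s-t)-1)+\cosh(\mu(s-t))(\cosh(s+t)-1)\ge 0$ check out — and you treat $K_{\nu+1}/K_\nu=\nu/x-K_\nu'/K_\nu$ by the variance identity $\partial_x\mathbb{E}_x[\cosh T]=-\mathrm{Var}_x(\cosh T)<0$ for the measure with density proportional to $e^{-x\cosh t}\cosh(\nu t)$, correctly observing that this third case is not a direct consequence of Tur\'an because of the cross term $(2\nu+1)K_\nu K_{\nu+1}/x$. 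The trade-off: the paper's argument is uniform over the three ratios and uses only the recurrences and asymptotics already collected in its appendix, at the cost of a somewhat delicate qualitative ODE/boundary-behaviour analysis (with separate asymptotics for $\nu=0$ and $\nu>0$); yours imports the classical integral representation of $K_\nu$ (standard, e.g.\ in the cited Magnus--Oberhettinger--Soni, though not stated in the paper) and in return gives clean strict inequalities valid for all orders with no boundary analysis. Two small points to make explicit in a write-up: justify differentiation under the integral sign (routine domination for $x>0$), and note $K_{-1}=K_1$ so that the Tur\'an inequality at $\mu=0$, which you need for the second ratio when $\nu=0$, reduces to $K_0^2<K_1^2$.
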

\begin{proof}
    Let $w(x) = K_{\nu+1}(x)/(xK_{\nu}(x))$,
which is positive and
    smooth on $(0,\infty)$. We take the derivative of both
    sides of $K_{\nu+1}(x)=xw(x)K_{\nu}(x)$ and use 
    the recurrence relation
\[
    -K_{\nu}(x)-(\nu+1)K_{\nu+1}(x)/x=w(x)K_{\nu}(x)+xw'(x)K_{\nu}(x)+w(x)\big(
    \nu K_{\nu}(x)-xK_{\nu+1}(x)\big),
\]
which is equivalent to the differential equation for $w$
\begin{equation}\label{eq:weq}
2(\nu+1)w(x)+xw'(x)-x^2w(x)^2+1=0.
\end{equation}
Taking the derivative of~\eqref{eq:weq} w.r.t $x$,
\begin{equation}\label{eq:wdeveq}
(2\nu+3)w'(x)+xw''(x)-2xw(x)^2-2x^2w(x)w'(x)=0.
\end{equation}
We can first get the ``boundary conditions'' for $w$ near the
origin or infinity, by asymptotic expansions. When $x$ is close to
the origin, one uses \eqref{eq:bksmall} to deduce
\[
 w(x) \sim 2\nu x^{-2}, \quad
w'(x) \sim -4\nu x^{-3}<0,\quad
w''(x) \sim 12\nu x^{-4}>0,
\]
for $\nu>0$ and
\[
 w(x) \approx \frac{1}{x^2(-\frac{1}{2}\ln x- \gamma)}, \quad
 w'(x)  \sim \frac{4}{x^3\ln x}<0,\quad
 w''(x) \sim -\frac{12}{x^4\ln x} >0\,,
\quad
\]
for $\mu=0$. When $x$ is large, by the asymptotic
expansion~\eqref{eq:bklarge}, one gets
\[
    w(x) \sim \frac{1}{x}\left(1-\frac{2\nu+1}{2x}\right),\quad
    w'(x) \sim -\frac{1}{x^2}<0,\quad
    w''(x) \sim \frac{2}{x^3}>0.
\]
Therefore,  $w(x)>0, w'(x)<0, w''(x)>0$ when $x$ is near origin and $x \to
\infty$. Moreover, $w$ has no local maximum on $(0,\infty)$. Otherwise
if there is a local maximum at $x_0$, then $w'(x_0)=0$, $w''(x_0)\leq 0$.
On the other hand, by~\eqref{eq:wdeveq}, $w''(x_0) = 2w(x_0)^2 >0$,
a contradiction.

Next, we show that $w'(x)< 0$ on $(0,\infty)$. If
$w'(x)>0$  at some point $x_1>0$, then by the fact that
$w'(x)<0$ when $x$ is large, $w$ must have a local maximum on
$(x_1,\infty)$ (because $w$ first increases and then decreases).
If $w'(x)=0$ at $x_2>0$, then by~\eqref{eq:wdeveq},
$w''(x_2)=2w(x_2)^2>0$. Hence there is a point $\tilde{x}_2>x_2$,
such that $w'(\tilde{x}_2)>0$, and it is reduced to the previous case.
Therefore, in either situation, there exists a local maximum on
$(0,\infty)$, contradicting the statement proved in the last
paragraph. This concludes the proof of the strict monotonicity of
$w$ on $(0,\infty)$.

Similarly, the monotonicity of $w_2(x)=K_\nu(x)/(xK_{\nu+1}(x))$
and $w_3(x)=K_{\nu+1}(x)/K_{\nu}(x)$ can be proved, by using the
second-order ODEs
\[
    (2\nu-2)w_2'(x)+2x^2w_2(x)w_2'(x)+2xw_2(x)^2-xw_2''(x)=0
\]
and
\[
2x^2w_3(x)w_3'(x)+(2\nu+1)w_3(x)-2(\nu+1)xw_3'(x)-x^2w_3''(x)=0\,.
\]
In all the three cases, the key ingredients of the proof are the
right ``boundary condition'' near the origin and infinity, and
$w''(x)>0$ at any point $x$ such that $w'(x)=0$.
\end{proof}

Lemma \ref{lem:monbesk} is needed in the proof of Theorem
\ref{theo2d}, where contrary to the three-dimensional counterpart, the
ratios of Bessel functions do not simplify for even dimensions.
The structure of the proof given below would apply in a similar
fashion in three dimensions to obtain Theorem \ref{theo3d} if the
simplified expressions \eqref{eq:3dbb+}--\eqref{eq:3dbb-} were
omitted. We begin with a discussion of the case $A=0$ for any
dimensions.

\begin{proof}(Theorem~\ref{theo2d})
Suppose $A=0$. Then, in general dimension $n$,
\[
\det M_0 =\tilde{B}_0(\ell)-\tilde{B}_0(1) =
2R^2\left[ \frac{1}{kR/\ell} \frac{K_{\frac{n}{2}+1}(kR/\ell)}
{K_{\frac{n}{2}}(kR/\ell)} -
\frac{1}{kR} \frac{K_{\frac{n}{2}+1}(kR)}
{K_{\frac{n}{2}}(kR)}
\right]<0,
\]
as $\ell<1$ and the strict monotonicity of
$K_{\frac{n}{2}+1}(x)/(xK_{\frac{n}{2}}(x))$ is provided by
Lemma~\ref{lem:monbesk}. Hence, no real positive roots of $\det
M_0$ exist in any dimension. Let us return to the case $n=2$ and
suppose $A<0$, then $C\ell^2>1$ by \eqref{A2d} and $\det M$ can be
expressed as
\begin{equation}
\begin{gathered}
    \det M_- = \tilde{B}_- (\ell) - \tilde{B}_- (1) =
    \left[
\left(1-\frac{a^2\ell^2}{k^2}\right)^{-1}
-\left(1-\frac{a^2}{k^2}\right)^{-1}
    \right] I_0(aR) \cr
\quad +\frac{a}{k}\left[
\ell \left(1-\frac{a^2\ell^2}{k^2}\right)^{-1}
\frac{K_0(kR/\ell)}{K_1(kR/\ell)}
-\left(1-\frac{a^2}{k^2}\right)^{-1}
\frac{K_0(kR)}{K_1(kR)}
\right] I_1(aR) \cr
=
\frac{(C-1)(1-C\ell^2)}{C(1-\ell^2)} I_0(aR)
    + \frac{(C-1)a\ell^2}{k(1-\ell^2)}\left(
            \frac{1}{C\ell}\frac{K_0(kR/\ell)}{K_1(kR/\ell)}
            -\frac{K_0(kR)}{K_1(kR)}\right)
            I_1(aR),
\end{gathered}
\end{equation}
using~\eqref{eq:2db-}.
The coefficient of $I_0(aR)$ is obviously negative. By
the monotonicity of $K_0(x)/(xK_1(x))$,
\begin{align*}
 \frac{1}{C\ell}\frac{K_0(kR/\ell)}{K_1(kR/\ell)}
            -\frac{K_0(kR)}{K_1(kR)}
&<  \ell\frac{K_0(kR/\ell)}{K_1(kR/\ell)}
            -\frac{K_0(kR)}{K_1(kR)}\\
&< kR\left( \frac{\ell}{kR} \frac{K_0(kR/\ell)}{K_1(kR/\ell)}
            -\frac{1}{kR}\frac{K_0(kR)}{K_1(kR)}
\right)<0.
\end{align*}
This implies that $\det M_-<0$. Therefore, there is no
flock profile when $A\leq 0$.

Next, consider the case $A>0$. The determinant of the coefficient matrix is given as
\[
\det M_+ =
\frac{(C-1)(1-C\ell^2)}{C(1-\ell^2)}J_0(aR) -
    \frac{(C-1)a\ell^2}{k(1-\ell^2)}
    \left[
            \frac{1}{C\ell}\frac{K_0(kR/\ell)}{K_1(kR/\ell)}
            -\frac{K_0(kR)}{K_1(kR)}
    \right] J_1(aR).
\]
Let $0=\tilde{R}_0 < \tilde{R}_1 < \cdots$ be the simple zeros of
$J_1(aR)$, then by the relation $J_0'(x)=J_1(x)$, $\tilde{R}_j$
are also the critical points of $J_0(aR)$. Since $\det
M_+|_{R=\tilde{R}_j}$ has alternating signs, $\det M_+$ has at
least one root on $(\tilde{R}_j,\tilde{R}_{j+1})$ and therefore,
infinitely many roots on $(0,\infty)$.

Let ${R}^*$ be the first root in the first interval
$(\tilde{R}_0,\tilde{R}_1)$, then we must have
$\tilde{B}_+(\ell)|_{R=R^*} = \tilde{B}_+(1)|_{R=R^*}<0$
as illustrated in Figure~\ref{fig:extflock2d}(a).
Otherwise, if $\tilde{B}_+(\ell)|_{R=R^*} =
\tilde{B}_+(1)|_{R=R^*}\geq 0$, using \eqref{eq:2db+} we deduce
\[
J_0(aR^*) \geq \frac{a\ell}{k} J_1(aR^*)
\frac{K_0(kR^*/\ell)}{K_1(kR^*/\ell)},\quad J_0(aR^*) \geq
\frac{a}{k} J_1(aR^*) \frac{K_0(kR^*)}{K_1(kR^*)}.
\]
On the other hand, since $J_1(aR^*)$ is positive together with the
monotonicity of $K_0(x)/(xK_1(x))$,
\[
J_0(aR^*) - \frac{a\ell}{k}J_1(aR^*) \frac{K_0(kR^*/\ell)}{K_1(kR^*/\ell)} >
J_0(aR^*) - \frac{a}{k}J_1(aR^*) \frac{K_0(kR^*)}{K_1(kR^*)} \geq 0,
\]
and consequently,
\begin{align}
\tilde{B}_+(\ell) &= \left(1+\frac{a^2\ell^2}{k^2}\right)^{-1}\left[
J_0(aR^*) - \frac{a\ell}{k}J_1(aR^*) \frac{K_0(kR^*/\ell)}{K_1(kR^*/\ell)}\right] \cr
&> \left(1+\frac{a^2}{k^2}\right)^{-1}\left[
J_0(aR^*) - \frac{a}{k}J_1(aR^*) \frac{K_0(kR^*)}{K_1(kR^*)}\right]
=\tilde{B}_+(1),
\end{align}
contradicting the fact that $R^*$ satisfies $\det M_+|_{R=R^*}=
\tilde{B}_+(\ell)|_{R=R^*} - \tilde{B}_+(\ell)|_{R=R^*}=0$.

Since $\mu_2=-\tilde{B}_+(1)|_{R=R^*}\mu_1$, then $\mu_1$ and
$\mu_2$ have the same sign. If the corresponding density $\rho(r) =
\mu_1 J_0(ar)+\mu_2$ at the origin is nonnegative, then both
$\mu_1$ and $\mu_2$ are positive. We first factor out $J_0(aR^*)$
from the equation $\tilde{B}_+(\ell)|_{R=R^*} -
\tilde{B}_+(\ell)|_{R=R^*}=0$, i.e.,
\[
J_0(aR^*) = \frac{1}{ak(1-\ell^2)}\left[
\ell(k^2+a^2)\frac{K_0(kR^*/\ell)}{K_1(kR^*/\ell)}
-(k^2+a^2\ell^2)\frac{K_0(kR^*)}{K_0(kR^*)}
\right].
\]
Substituting this into $\rho(R^*) = \mu_1 J_0(aR^*) +\mu_2 =
\mu_1(J_0(aR^*)-\tilde{B}_+(\ell)|_{R=R^*})$, we conclude
\[
\rho(R^*) = \frac{a\ell^2}{k^2R(1-\ell^2)}\left[
\frac{kR^*}{\ell}\frac{K_0(kR^*/\ell)}{K_1(kR^*/\ell)}
-kR^*\frac{K_0(kR^*)}{K_1(kR^*)}
\right] J_1(aR^*)\mu_1 > 0.
\]
Finally, since $R^*$ is smaller than the first local minimum
$\tilde{R}_1$ of $J_0(ar)$, $\rho(r) = \mu_1 J_0(aR)+\mu_2$ is
decreasing on $[0,R^*]$. Thus, the strict positivity of $\rho(r)$
on $[0,R^*]$ results from the strict positivity of $\rho(R^*)$.
\end{proof}

\begin{figure}[htp]
 \begin{center}
 \subfloat[The intersection of $\tilde{B}_+(\ell)$
 and $\tilde{B}_+(1)$ at $R_j^*$]{\includegraphics[keepaspectratio=true,
width=.5\textwidth]{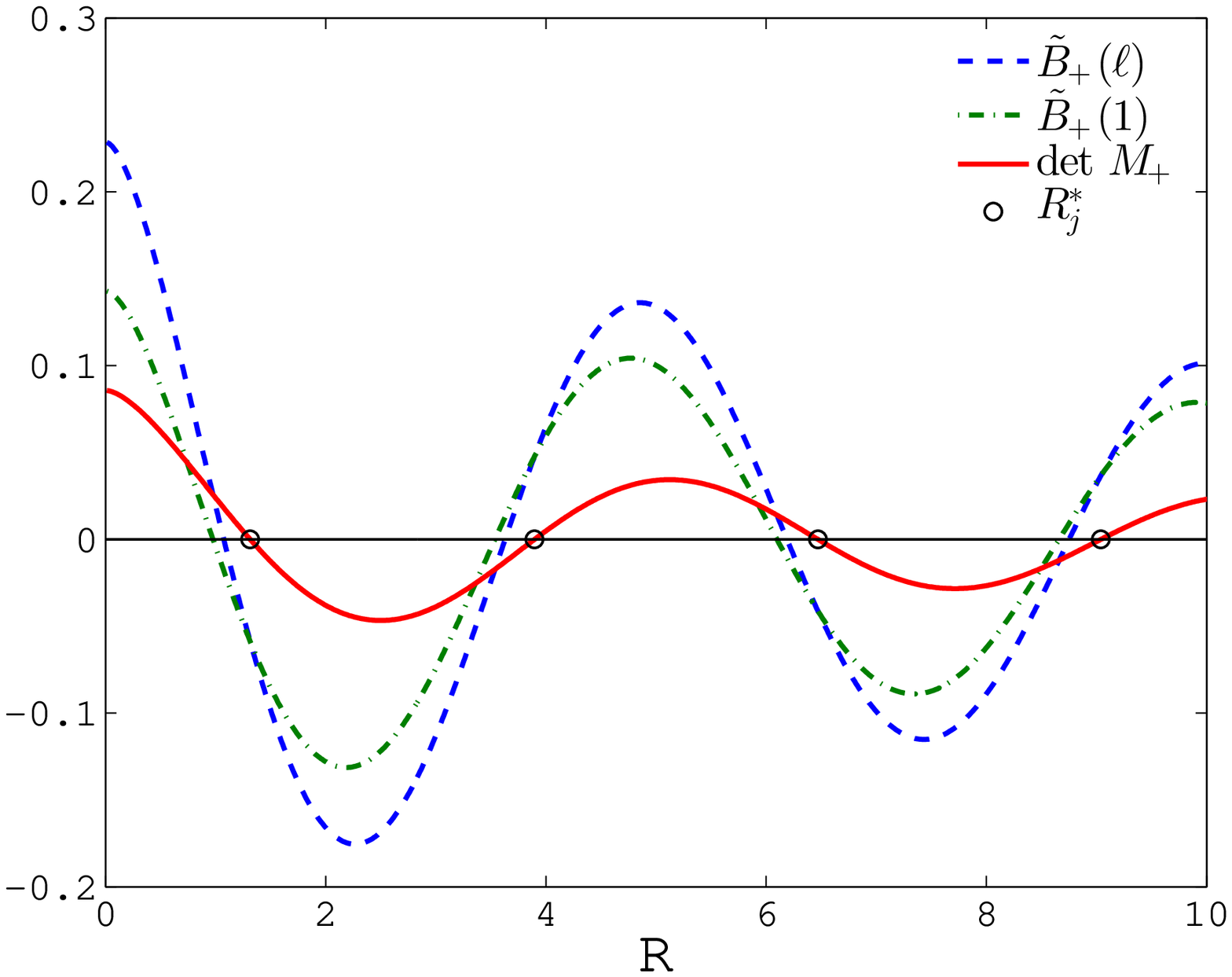}}
\subfloat[The densities corresponding to $R_j^*$]
{\includegraphics[keepaspectratio=true,
width=.5\textwidth]{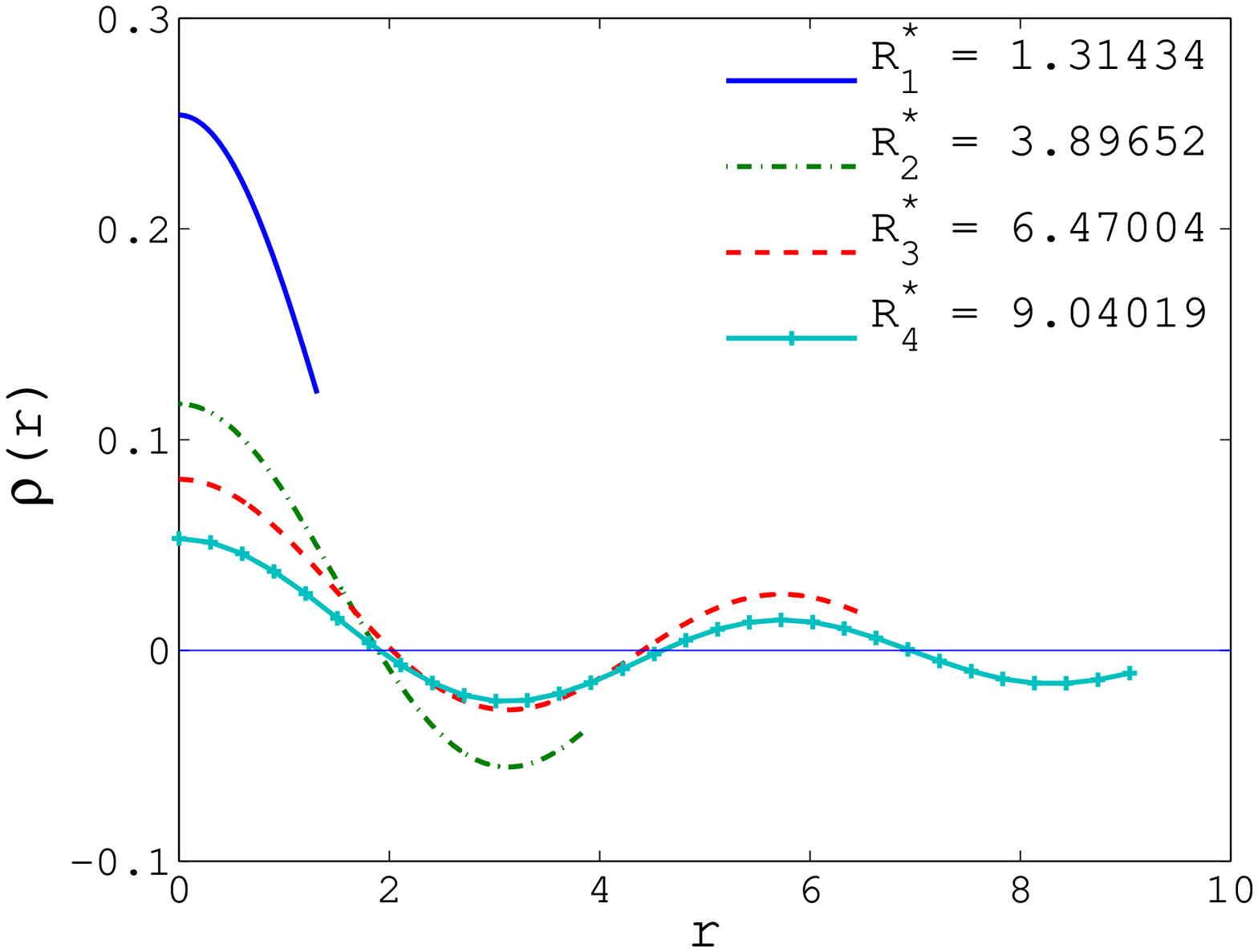}}
 \end{center}
\caption{The roots of the determinant $M_+$ and the corresponding
flock profiles. Only the first zero $R_1^*$ is physically relevant,
as the densities become negative on the support $(0,R_k^*)$ for the
other roots $R_k^*$. The parameters $C=10/9$, $\ell=0.75$, $k=1/2$ and $A=1.5$
are the same as in~\cite{Carrillo2013}.
}
\label{fig:extflock2d}
\end{figure}

\begin{rem}
Theorem \ref{theo2d} lacks the uniqueness result of Theorem
\ref{theo3d}. However, numerical investigations point towards a
uniqueness result similar to three dimensions. As an example, we
illustrate $\det M_+$ and the densities associated to its roots
for a set of parameters investigated in \cite{Carrillo2013} in
Figure \ref{fig:extflock2d}. To prove uniqueness in two
dimensions, the possibility of nonnegative densities for roots
$R^*>\tilde{R}_2$ and the possibility of multiple solutions $\det
M_+=0$ in $(\tilde{R}_0,\tilde{R}_1)$ have to be ruled out.
\end{rem}


\section{Further properties of flock profiles for the Quasi-Morse potential}

Let us remark that there are parameters $(C,\ell)$ such that the
convolution equation \eqref{eq:steadyeq} has a solution even
though they do not belong to the biologically relevant cases.
Flock profiles, as defined in Definition 1.1, can be found by
similar proofs as in the previous two sections in the region
$\{(C,\ell) \mid \ell>1, C\ell^{n-2}>1,  C\ell^{n}<1\}$, where $U$
has a positive global maximum. This family of flock profiles are
in fact those that are corresponding stable steady solution in the
time-reversed first-order swarming system \eqref{eq:firstorder},
and are not observed in simulations, since they are unstable, both
for first-order and second-order particle models.

The proofs in the previous two sections also indicate the
dependence of the flock profiles with respect to the size of their
support $R^*$ parameterized by $\ell$, at least in the asymptotic
limit of $\ell$ approaching its lower and upper limit. For example
in 3D, since $R^* \in (\tilde{R}_1,\tilde{R}_2)$
 and $\tilde{R}_j \sim O(a^{-1})$, we have $R^* \sim O(a^{-1})$.

In three dimensions, for fixed parameters $C$ and $k$, if $\ell$ is
close to its upper limit $C^{-1/3}$ in the parameter space, then
$a = k\sqrt{(1-C\ell^3)/(C\ell^3-\ell^2)}$ is close to zero, and
for the auxiliary function $g(R)$ defined in~\eqref{eq:gfun1}, we
have
\[
g(R) = \frac{a}{k} \frac{ (a^2\ell-k^2)kR+a^2\ell(\ell+1)} {
a^2(\ell+1)kR+k^2+a^2(\ell^2+\ell+1)} \approx -aR.
\]
The desired root $R^*$ can be approximated from the simplified
equation $\tan a R -aR = 0$, which is simply
$R^*\approx\bar{r}_1\approx 4.49/a$ in the last step of the proof
of Theorem~\ref{theo3d}. Therefore, as $\ell$ increases to
$C^{-1/3}$, the radius of support of the flock profile also
approaches the first minimum of $r^{-1/2} J_0(ar)$.

On the other hand, if $\ell$ is close to its lower limit $C^{-1}$,
$a$ diverges, and
\[
g(R) \approx \frac{a}{k}\frac{\ell k
R+\ell^2+\ell}{(\ell+1)kR+\ell^2+\ell+1}.
\]
Since $\ell$ is close  to $C^{-1}$ and the desired root $R^* \sim
a^{-1}$ is close to zero, $g(R)$ can be further simplified to
\[
g(R) \approx \frac{a}{k}\frac{C+1}{C^2+C+1} := a\bar{C},
\]
a constant proportional to $a$. From the asymptotic equation $\tan
aR^*+ a \bar{C}=0$, $aR^*$ approaches $\pi/2$ from above, or $R^*
\approx \pi/(2a)$.

Summarizing, in term of the original parameters $k$, $C$  and
$\ell$,
\begin{equation}\label{eq:Rupper}
R^* = \frac{4.49\sqrt{1-C^{-2/3}}}{k}(1-C\ell^3)^{-1/2} +
O(|1-C\ell^3|)
\end{equation}
when $\ell$ is close to $C^{-1/3}$ and
\begin{equation}\label{eq:Rlower}
R^* =\frac{\pi}{2k\sqrt{C^2-1}}(C\ell-1)^{1/2}+O(|C\ell-1|)
\end{equation}
when $\ell$ is close to $C^{-1}$. The comparison between these
asymptotic expansions of $R^*$ with those obtained from solving
$\det M_+=0$ by a root-finding algorithm is shown in Figure~\ref{fig:3dasymp}.
Substituting the above expressions into $M_+$, the expansions
for $\mu_1$ and $\mu_2$ can be obtained accordingly.

\begin{figure}[htp]
\includegraphics[totalheight=0.26\textheight]{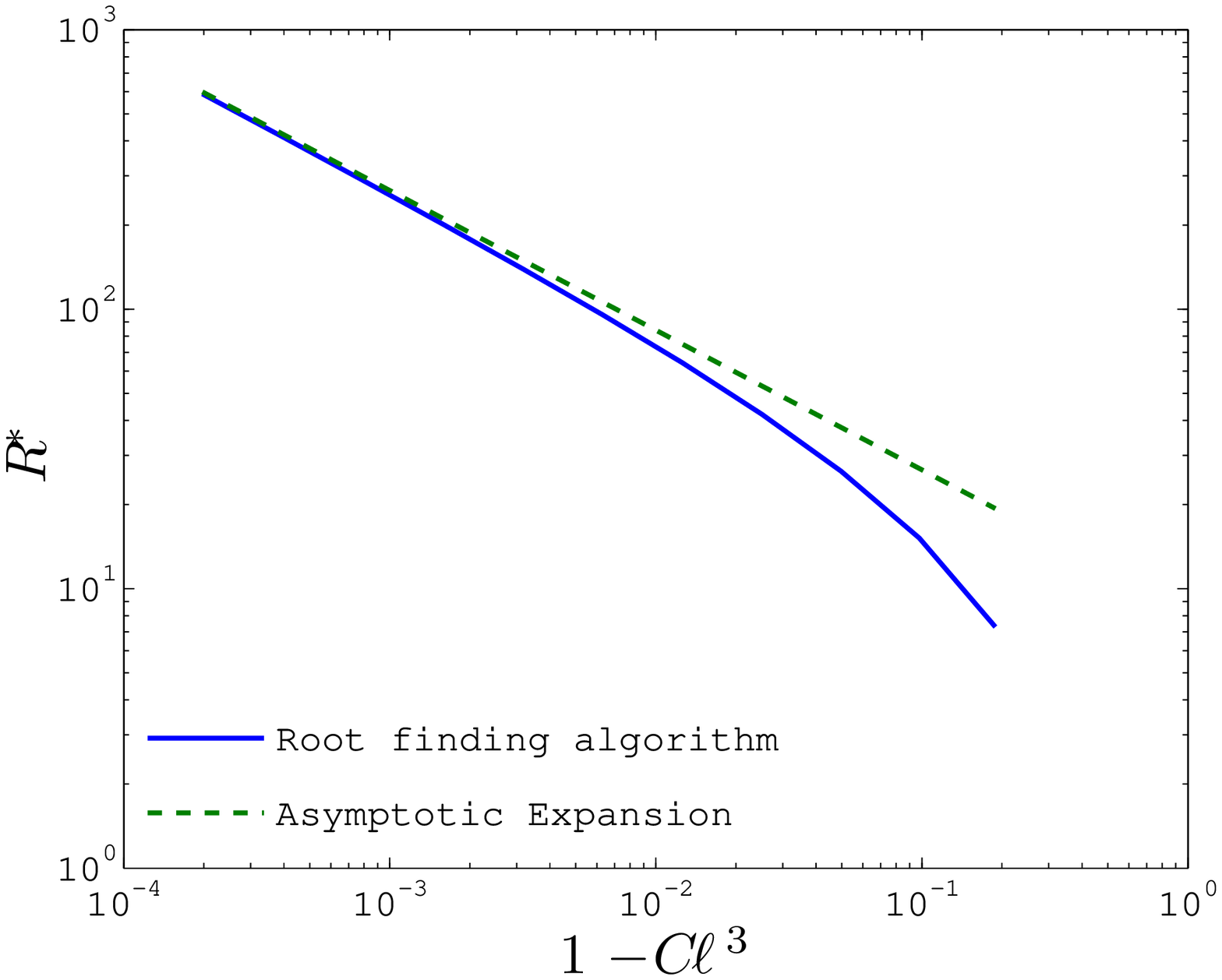}
\includegraphics[totalheight=0.26\textheight]{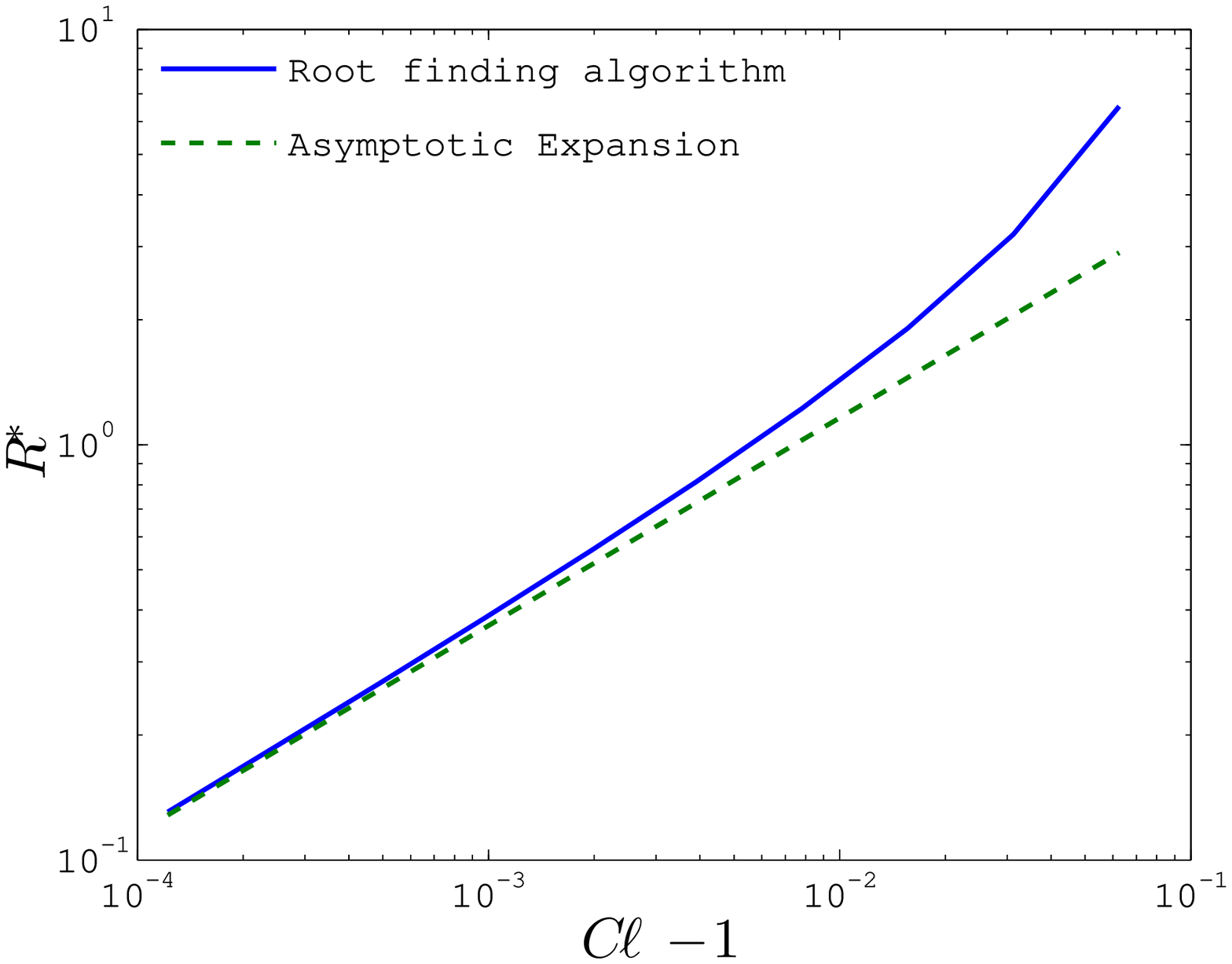}
\caption{The comparison between the radius of support $R^*$ by a
root finding algorithm of $\det M_+=0$ and the asymptotic
expansion given by~\eqref{eq:Rupper} and ~\eqref{eq:Rlower}. }
\label{fig:3dasymp}
\end{figure}

In two dimensions, the leading-order asymptotic expansion of $R^*$
can be derived similarly. When $\ell$ is close to zero, $a \approx
k/(\ell\sqrt{C-1})$ is large and $R^*\sim a$. Assuming $R^* = \ell
R_0 + O(\ell^2)$ for some $R_0>0$, then
\[
\tilde{B}_+(\ell)|_{R=R^*} \approx \frac{C-1}{C}\left[
J_0\big(kR_0/\sqrt{C-1}\big)-\frac{1}{\sqrt{C-1}}
J_1\big(kR_0/\sqrt{C-1}\big)\frac{K_0(kR_0)}{K_1(kR_0)}
\right]=O(1)
\]
and
\[
\tilde{B}_+(1)|_{R=R^*} \approx \ell^2(C-1)\left[
J_0\big(kR_0/\sqrt{C-1}\big) -
\frac{1}{\ell\sqrt{C-1}}J_1\big(kR_0/\sqrt{C-1}\big)
\frac{K_0(kR_0/\ell)}{K_1(kR_0/\ell)} \right].
\]
Since $\frac{K_0(kR_0/\ell)}{K_1(kR_0/\ell)} \to 1$ as $\ell \to
0$, we have
 $\tilde{B}_+(1)|_{R=R^*} = O(\ell)$ and $\tilde{B}_+(\ell)|_{R=R^*}
 \gg \tilde{B}_+(1)|_{R=R^*}$ unless the leading order in
 $\tilde{B}_+(\ell)|_{R=R^*}$ vanishes. Therefore, the coefficient $R_0$ is determined by
 \[
 J_0\big(kR_0/\sqrt{C-1}\big)=\frac{1}{\sqrt{C-1}}
J_1\big(kR_0/\sqrt{C-1}\big)\frac{K_0(kR_0)}{K_1(kR_0)},
 \]
where the positive number $kR_0/\sqrt{C-1}$ is smaller than the
first positive root of $J_0$ since this equation has infinitely
many roots.

When $\ell$ is close to $C^{-1/2}$, $a$ is small and $\det M_+$ is
\[
 \frac{a^2(1-\ell^2)}{k^2(1+a^2/k^2)(1+a^2\ell^2/k^2)}J_0(aR^*)-
\frac{a(C-1)\ell^2}{k(1-\ell^2)}\left[
\frac{1}{C\ell}\frac{K_0(kR^*/\ell)}{K_1(kR^*/\ell)}-
\frac{K_0(kR^*)}{K_1(kR^*)} \right]J_1(aR^*).
\]
From the fact that $R^*$ diverges,
\[
 \frac{1}{C\ell}\frac{K_0(kR^*/\ell)}{K_1(kR^*/\ell)}-
\frac{K_0(kR^*)}{K_1(kR^*)} \to \frac{1}{C\ell}-1 \approx C^{-1/2}
-1 \neq 0.
\]
Therefore, $\det M_+=0$ only if $J_1(aR^*)$ vanishes to have both
terms above of order $a^2$. In other words, $R^*$ converges to the
first positive root of $J_1(ar)$. Consequently, the expansions of
$R^*$ in two dimensions can be obtained.


\section{Variants of Morse-type potentials}

In the previous sections, we have shown that flock profiles
precisely exist for the Quasi-Morse potential when the parameters
$C$ and $\ell$ are in the region $\{ (C,\ell)\mid C\ell^{n-2}>1,
\ell<1, C\ell^{n}<1\}$, see Figure~\ref{figresult}. The conditions
$C\ell^{n-2}>1$ and $\ell<1$ ensure that the potential $U(r)$ is
\emph{biologically relevant} since it has a positive global
minimum, while the condition $C\ell^{n}<1$ is related to the 
\emph{non-H-stability} of the potential. A similar result for the
Morse-potential is presented in~\cite{d2006self}. The claim,
that a positive global minimum of the potential and non-H-stability imply
existence of compactly supported flock solutions, also seems to be
true for other similar potentials of the form
$U(r)=V(r)-CV(r/\ell)$, but concentration of density may appear
and the dimensionality of the support can vary with $U$. We show
some numerical evidence in support of the claim for the
generalised Morse-like potential with
\begin{equation}
V(r)=-e^{-\frac{r^p}{p}},\qquad p>0.\label{morselike}\end{equation}

For this potential, the non-H-stability condition $C\ell^{n}<1$ is
the same but the biologically relevant region is given by $\ell<1$
and $C>\ell^p$. The numerical simulations were conducted by
finding stationary profiles of the first-order swarming system of
particles given by
\begin{equation}\label{eq:firstorder}
    \frac{dx_i}{dt}=-\frac1N \sum_{j\neq i} \nabla W\left(x_i-x_j\right),
\quad i=1,\dots, N.
\end{equation}
Taking these positions and the common velocity $u_0$
with $|u_0|^2=\alpha/\beta$ as initial data
for the second-order system \eqref{eq:partsys2}, the resulting
stationary solution is stable~\cite{CHM2}.

In Figure~\ref{fig:general} (a), we observe generic
non-concentrated compactly supported flock profiles for the
exponent $p=\frac{1}{2}$ and $\ell <\ell^*= C^{1/p}=0.36$ that appear to converge to a continuous
distribution as $N\rightarrow \infty$. The same phenomena are
observed for exponents $p \in (0,1)$.

However, this type of aggregation cannot be expected for exponents
$p \in (1,2)$. For $C<1$, the density seems to concentrate towards
its boundary when $\ell$ approaches $\ell^*=C^{1/p}$, as illustrated in
Figure~\ref{fig:general} (b). For $C>1$, we observe mixed
dimensionality of the support in Figure~\ref{fig:general} (c) for
varying exponents $p$ approaching the limit case $p=2$. Flock
profiles seem to bifurcate as $p\to 2$ leading to a concentration
on a ring plus a continuous distribution inside. To our knowledge
this surprising phenomenon of mixed dimensionality of the support
has only been reported in 3D simulations in
\cite{soccerball,BCLR2} for purely attractive-repulsive potentials. In a swarming model of locusts in 2D using Morse 
potential~\cite{Bernofflocust,MR2788924,MR3124884}, the concentration of densities on the (one-dimensional) ground can also be reproduced
from obervations in nature, by including additional external gravity force. 

\begin{figure}[htp]
    \begin{center}
    \subfloat[$p=1/2, C = 0.6$]{\includegraphics[keepaspectratio=true,
width=.33\textwidth]{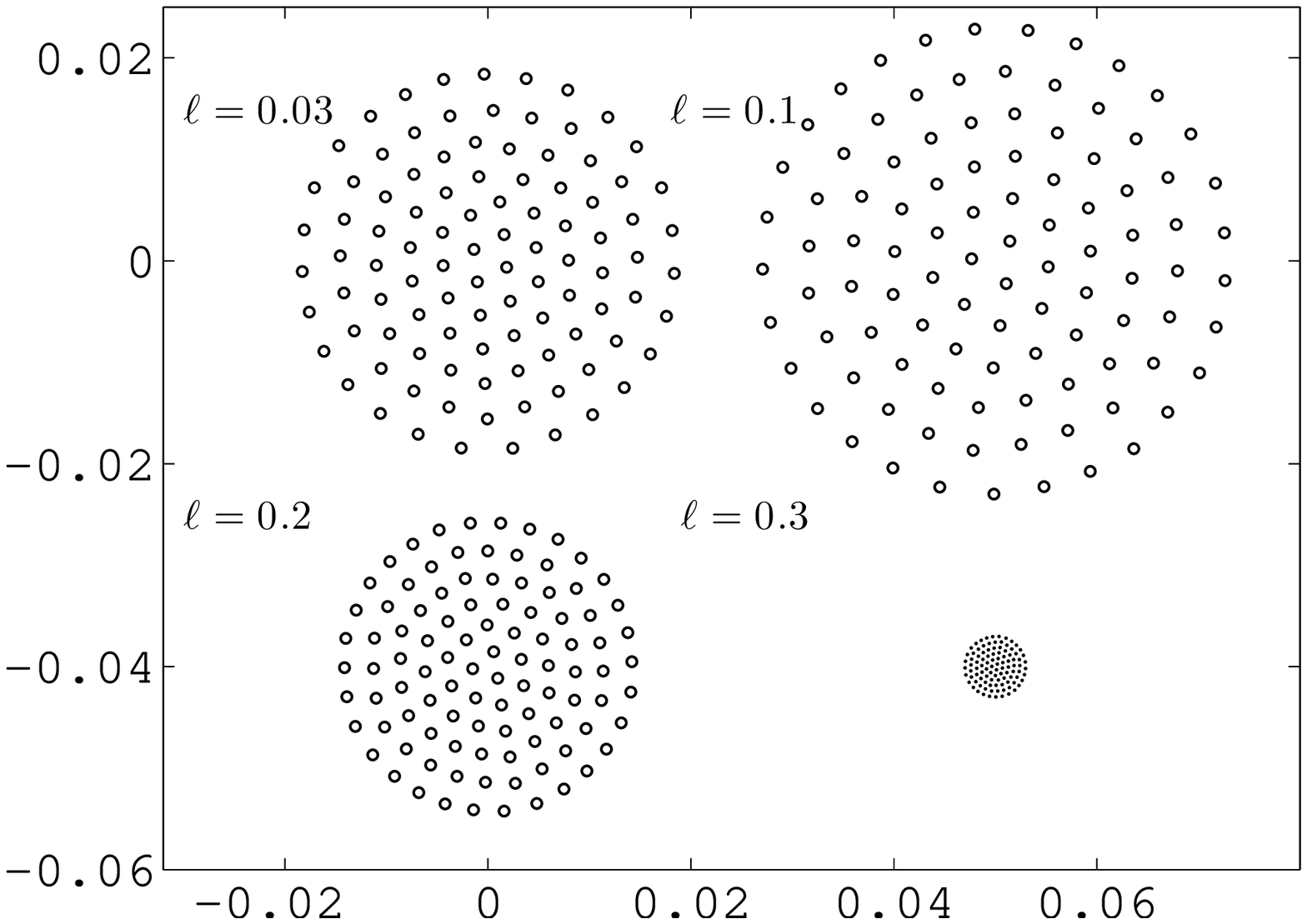}}
$~~$
\subfloat[$p=3/2, C = 0.6, \ell^*=C^{1/p}=0.7114$]
{\includegraphics[keepaspectratio=true,
width=.62\textwidth]{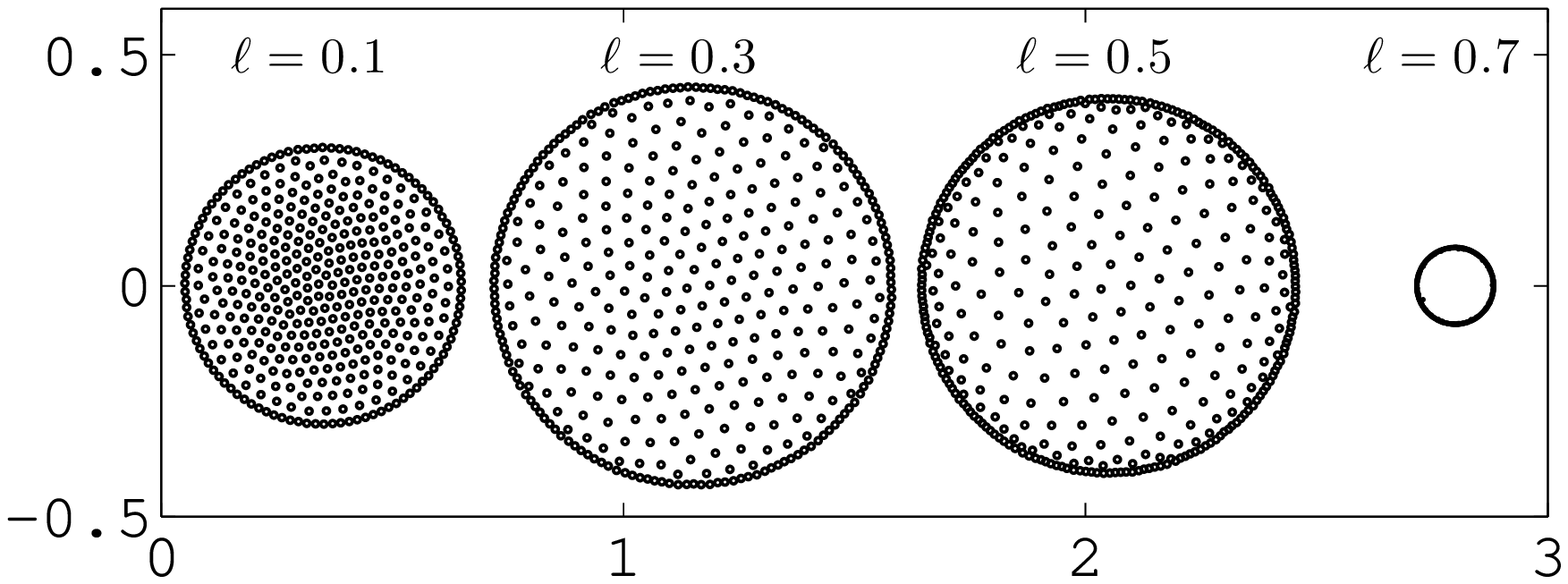}} \\
\subfloat[Different $p$'s with $C=10/9, \ell=3/4$]
{\includegraphics[keepaspectratio=true,
width=.8\textwidth]{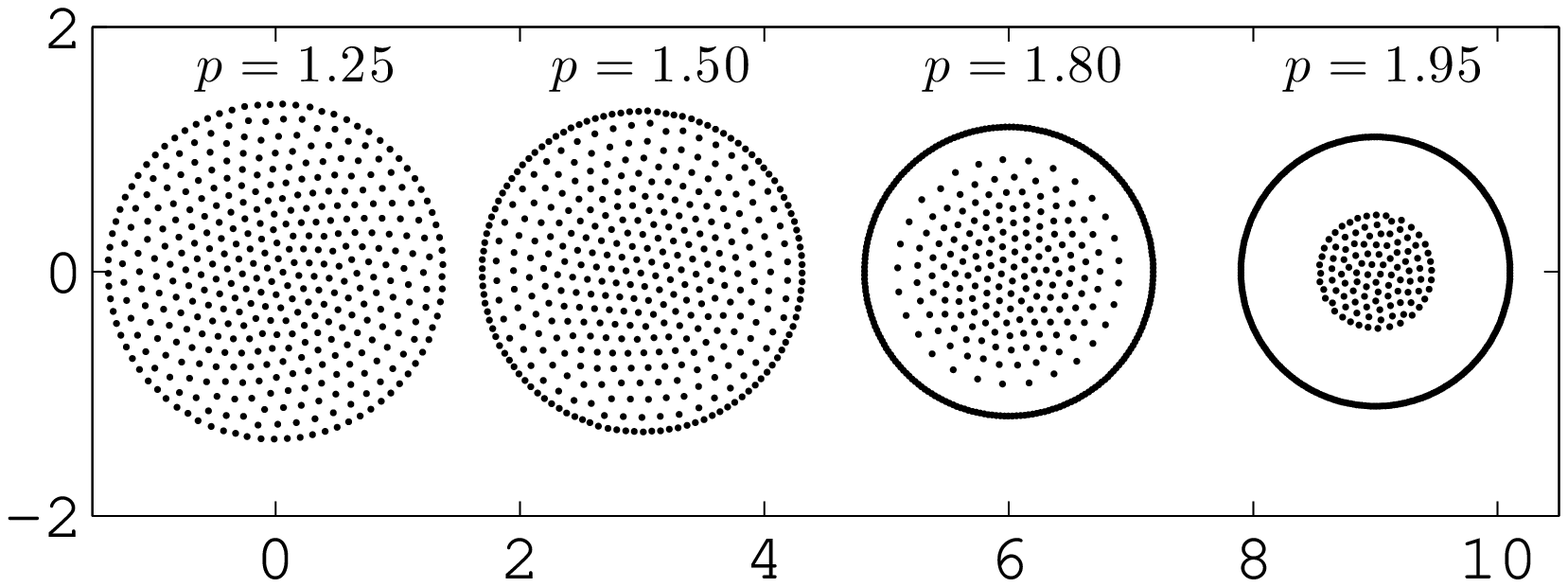}}
\end{center}
    \caption{The flock profiles from the particle
        simulations of the first-order
        system~\eqref{eq:firstorder} for the generalised
        Morse-like potential with $V(r)=-e^{r^p/p}$.}
        \label{fig:general}
\end{figure}

This concentration and dimensionality of the support of the steady
density is related to the singularity of $U$ near the origin, as
 has already been demonstrated in~\cite{BCLR2}. Here, we have to
argue by numerical experiments as existence proofs will be
difficult, partially because of the absence of explicit formulas. 
Similarly discussions can be found in~\cite{Kolokolnikov201365} 
for solutions perturbed from a ring solution, and in~\cite{MR2788924,MR3124884} for extensive 1D examples 
with $\delta$-concentration on a domain boundary. However, a detailed analytical 
investigation of these and other properties,
such as the integrability of the density near the boundary, remains a
challenging question for the potentials considered.


\section{Conclusion}
In this paper, we analyzed the solvability of convolution
equations that describe particular solutions in aggregation or
self-propelled interacting particle models equipped with radially
symmetric interaction potentials. Although models such as
\eqref{eq:partsys2} and \eqref{eq:firstorder} have been frequently
used with various potentials, the analysis of particular solutions
such as flock profiles and rotating mills is far from complete. We
concentrated our attention on the study of flock profiles, defined
as compactly supported continuous radial densities satisfying
equation \eqref{eq:steadyeq}. Focusing on the case of Quasi-Morse
potentials introduced in \cite{Carrillo2013}, we were able to
analytically study the parameter phase portrait of these
potentials in two and three dimensions, and to prove analytically
solvability conditions for flock profiles that were previously
asserted numerically. These findings are summarized in Figure
\ref{figresult}: The aggregate potential parameter $A$ determines
solvability in the biologically relevant parameter regimes. In
three dimensions, we showed existence and uniqueness of flock
profiles for $A>0$, whereas no flock profiles exists if $A\leq 0$.
The same non-existence result holds true in two dimensions, where
flock profiles are shown to exist if and only if $A>0$. The proof
of our main Theorems \ref{theo3d} and \ref{theo2d} is based on a
technical discussion of the Bessel functions contained in the
definition of the Quasi-Morse potentials and the explicit formulas
of their flock profiles obtained in \cite{Carrillo2013}. First, an
explicit expression for the convolution $W \star \rho$ was
derived for the three cases $A>0, A=0$ and $A<0$. Then, a detailed
analysis of the resulting expressions enabled us to establish our
theorems. A central observation is the fact that the question of
existence and uniqueness of flock profiles reduces to the study of
roots of a determinant of the coefficient matrix $M$. Due to the
simpler functions involved, results obtained in three dimensions
are slightly stronger than in two dimensions.

In summary, this paper is the first to our knowledge to complete a
full analytical study of the existence of flock profiles in the
biologically relevant parameter regime, at least for a particular
potential. The analysis of the Quasi-Morse potential and our
simulations seem to indicate the existence of flock solutions as
long as the potential has a unique positive global minimum and is
not H-stable. Characterizing when they are flock profiles is
challenging and related to the dimensionality of the support of
minimizers of the interaction energy \cite{BCLR2}. Proving or
disproving these claims for other potentials in
\eqref{eq:partsys2}, such as the Morse-type potentials
\eqref{morselike}, as well as the question of stability of such
states in the dynamics of the associated PDEs however remains an
open and challenging problem.

\appendix

\section{Bessel functions and Modified Bessel functions}
\label{sec:bessel} In this paper, Bessel functions and modified
Bessel functions are heavily used to study the
analytically more tractable Quasi-Morse type
potential~\eqref{eq:quasimorse}. The definitions and key
properties of these Bessel functions, found in standard textbook in special
functions~\cite{MR0232968}, are collected below for the readers'
convenience.

The Bessel functions of the first kind $J_\nu(x)$ and of the
second kind $Y_\nu(x)$ are solutions of the equation
\begin{equation}\label{eq:besselde}
x^2\frac{d^2y}{dx^2}+x\frac{dy}{dx}+(x^2-\nu^2)y=0,
\end{equation}
that are finite and singular at the origin for positive $\nu$,
respectively. The modified Bessel function of the first kind
$I_\nu(x)$ and of the second kind $K_\nu(x)$ are solutions of
the equation
\begin{equation}\label{eq:modifiedbesselde}
x^2\frac{d^2y}{dx^2}+x\frac{dy}{dx}-(x^2+\nu^2)y=0,
\end{equation}
that are exponentially growing and decaying, respectively.

In two and three dimensions considered in this paper, the
(modified) Bessel functions with negative order $\nu$ can
be rewritten
in terms of those with positive order. In particular,
in two dimensions we have
\begin{equation}\label{eq:2dbessel}
J_{-1}(x) = - J_{1}(x),\quad I_{-1}(x)=I_{1}(x),\quad
\quad K_{-1}(x)=K_{1}(x),
\end{equation}
and in three dimensions, we have the following explicit
representations using the well-known
(hyperbolic) trigonometric functions
\begin{subequations}\label{eq:3dbessel}
\begin{equation}\label{eq:3dbessela}
J_{1/2}(x) = \sqrt{\frac{2}{\pi x}} \sin x,\quad
J_{-1/2}(x)= \sqrt{\frac{2}{\pi x}} \cos x,\quad
\end{equation}
\begin{equation}\label{eq:3dbesselb}
K_{1/2}(x)=K_{-1/2}(x)=\sqrt{\frac{\pi }{2x}} e^{-x},
\end{equation}
\begin{equation}\label{eq:3dbesselc}
\quad I_{1/2}(x)=\sqrt{\frac{2}{\pi x}} \sinh x,\quad
I_{-1/2}(x)=\sqrt{\frac{2}{\pi x}} \cosh x.
\end{equation}
\end{subequations}

\textbf{Recursive relations.} In the proof of the
Lemma~\ref{lem:monbesk}, the following recursive relations for the
modified Bessel function $I_\nu(x)$ and $K_{\nu}(x)$ are used
\begin{subequations}
\begin{align}\label{eq:recBesselIK}
I_\nu'(x) = I_{\nu-1}(x)-\frac{\nu}{x}I_\nu(x),\qquad
I_\nu'(x) = \frac{\nu}{x}I_\nu(x)+I_{\nu+1}(x), \cr
K_\nu'(x) = -K_{\nu-1}(x)-\frac{\nu}{x}K_\nu(x),\qquad
K_\nu'(x) = \frac{\nu}{x}K_\nu(x)-K_{\nu+1}(x).
\end{align}
In the equivalent integral form, the following  are used to evaluate~\eqref{eq:repconvint}
and in the proof of Proposition \ref{prop1} in Appendix B,
\begin{equation}\label{eq:intBesselIK}
 \int x^\nu I_{\nu-1}(x)dx = x^\nu I_{\nu}(x),\qquad
 \int x^\nu K_{\nu-1}(x)dx = -x^\nu K_{\nu}(x).
\end{equation}
\end{subequations}

\textbf{Asymptotic expansions.} In the proof of the
Lemma~\ref{lem:monbesk}, the following asymptotic expansions of
$K_\nu(x)$ for $x>0$ are also needed. When $x>0$ is close to the
origin,
\begin{equation}\label{eq:bksmall}
    K_\nu(x) \approx \begin{cases}
        -\ln \frac{x}{2}-\gamma, \qquad & \nu = 0,\cr
        \Gamma(\nu) 2^{\nu-1} x^{-\nu}, & \nu>0,
    \end{cases}
\end{equation}
with the Euler constant $\gamma$. When $x$ is large,
\begin{equation}\label{eq:bklarge}
    K_\nu (x)= \left(\frac{2}{\pi x}\right)^{1/2}e^{-x}\left[
1+\frac{4\nu^2-1}{8x}+\frac{(4\nu^2-1)(4\nu^2-9)}{2!(8x)^2}+\cdots
\right] \approx     K_{1/2}(x).
\end{equation}

\textbf{Additional identities and integrals.} The most important
identity to simplify the final expressions
in~\eqref{eq:repconvint} and in the proof of Proposition
\ref{prop1} in Appendix B, is
\begin{equation}\label{eq:KIexchange}
 K_{\nu+1}(x)I_{\nu}(x)+
K_{\nu}(x)I_{\nu+1}(x)=\frac{1}{x}.
\end{equation}

Finally, we need the following integrals involving products
of two Bessel functions~\cite[p.~87]{MR0232968} to
evaluate~\eqref{eq:repconvint} ,
\begin{subequations}
\begin{align}
 \int xJ_{\nu}(ax)K_{\nu}\left(\frac{kx}{\ell}\right) dx
&= -\frac{\ell^2}{k^2+a^2\ell^2}\left[
axJ_{\nu-1}(ax)K_{\nu}\left(\frac{kx}{\ell}\right)
+\frac{kx}{\ell}J_{\nu}(ax)K_{\nu-1}\left(\frac{kx}{\ell}\right)
\right], \label{eq:BesselJKint} \\
\int xJ_{\nu}(ax)I_{\nu}\left(\frac{kx}{\ell}\right) dx
&= \frac{\ell^2}{k^2+a^2\ell^2}\left[
-axJ_{\nu-1}(ax)I_{\nu}\left(\frac{kx}{\ell}\right)
+\frac{kx}{\ell}J_{\nu}(ax)I_{\nu-1}\left(\frac{kx}{\ell}\right)
\right],\label{eq:BesselJIint}\\
 \int xI_{\nu}(ax)K_{\nu}\left(\frac{kx}{\ell}\right) dx
&= \frac{\ell^2}{a^2\ell^2-k^2}\left[
axI_{\nu-1}(ax)K_{\nu}\left(\frac{kx}{\ell}\right)
+\frac{kx}{\ell}I_{\nu}(ax)K_{\nu-1}\left(\frac{kx}{\ell}\right)
\right], \label{eq:BesselIKint}\\
\int xI_{\nu}(ax)I_{\nu}\left(\frac{kx}{\ell}\right) dx
&= \frac{\ell^2}{a^2\ell^2-k^2}\left[
axI_{\nu-1}(ax)I_{\nu}\left(\frac{kx}{\ell}\right)
-\frac{kx}{\ell}I_{\nu}(ax)I_{\nu-1}\left(\frac{kx}{\ell}\right)
\right].\label{eq:BesselIIint}
\end{align}
\end{subequations}


\section{Proof of Proposition \ref{prop1}}
\label{sec:expconvdev} Here, we focus on the integrals related to
$V_\ell$, because those related to $V$ are obtained by
evaluating at $C=1$ and $\ell=1$.

First, we evaluate the integral~\eqref{eq:repconvint} when
$\rho(s)$ are the linearly independent functions in the general
solution~\eqref{eq:explicitsol}, i.e., the constant $1$, $r^2$,
$r^{1-n/2}J_{n/2-1}(ar)$ and $r^{1-n/2}I_{n/2-1}(ar)$
respectively. When $\rho(s) =1$,
\begin{align*}
&\quad K_{\frac{n}{2}-1}(kr/\ell)
\int_0^r s^{\frac{n}{2}}I_{\frac{n}{2}-1}(ks/\ell)ds
+I_{\frac{n}{2}-1}(kr/\ell)
\int_r^R s^{\frac{n}{2}}K_{\frac{n}{2}-1}(ks/\ell)ds  \cr
&= \left. \frac{\ell}{k}K_{\frac{n}{2}-1}\left(\frac{kr}{\ell}\right)
s^{\frac{n}{2}}I_{\frac{n}{2}}\left(\frac{ks}{\ell}\right)\right|_{s=0}^r -
\left. \frac{\ell}{k}I_{\frac{n}{2}-1}\left(\frac{kr}{\ell}\right)
s^{\frac{n}{2}}K_{\frac{n}{2}}\left(\frac{ks}{\ell}\right)\right|_{s=r}^R
\qquad \Bigl( \text{by }~\eqref{eq:intBesselIK}\Bigr) \cr
&= \frac{\ell}{k}r^{\frac{n}{2}}\left[
K_{\frac{n}{2}-1}\left(\frac{kr}{\ell}\right)
I_{\frac{n}{2}}\left(\frac{kr}{\ell}\right)
+I_{\frac{n}{2}-1}\left(\frac{kr}{\ell}\right)
K_{\frac{n}{2}}\left(\frac{kr}{\ell}\right)
\right]-\frac{\ell}{k}R^{\frac{n}{2}}
I_{\frac{n}{2}-1}\left(\frac{kr}{\ell}\right)
K_{\frac{n}{2}}\left(\frac{kR}{\ell}\right)\cr
&= \frac{\ell^2}{k^2}r^{\frac{n}{2}-1}
-\frac{\ell}{k}R^{\frac{n}{2}}
I_{\frac{n}{2}-1}\left(\frac{kr}{\ell}\right)
K_{\frac{n}{2}}\left(\frac{kR}{\ell}\right). \qquad\qquad\qquad
\qquad\qquad \Bigl(\text{by }
\eqref{eq:KIexchange}\Bigr)
\end{align*}
When $\rho(s)=r^2$, using~\eqref{eq:intBesselIK} and
integration by parts, we get
\begin{align*}
 \int s^{\frac{n}{2}+2} K_{\frac{n}{2}-1}(ks/\ell) ds &=
-\frac{\ell}{k}s^{\frac{n}{2}+2}K_{\frac{n}{2}}\left(
\frac{ks}{\ell}\right)-\frac{2\ell^2}{k^2}
s^{\frac{n}{2}+1}K_{\frac{n}{2}+1}\left(
\frac{ks}{\ell}\right), \cr
 \int s^{\frac{n}{2}+2} I_{\frac{n}{2}-1}(ks/\ell) ds &=
\frac{\ell}{k}s^{\frac{n}{2}+2}I_{\frac{n}{2}}\left(
\frac{ks}{\ell}\right)-\frac{2\ell^2}{k^2}
s^{\frac{n}{2}+1}I_{\frac{n}{2}+1}\left(
\frac{ks}{\ell}\right),
\end{align*}
and hence
\begin{subequations}
\begin{align}
 &\quad K_{\frac{n}{2}-1}(kr/\ell)
\int_0^r s^{\frac{n}{2}+2}I_{\frac{n}{2}-1}(ks/\ell)ds
+I_{\frac{n}{2}-1}(kr/\ell)
\int_r^R s^{\frac{n}{2}+2}K_{\frac{n}{2}-1}(ks/\ell)ds  \cr
&=\frac{\ell}{k}r^{\frac{n}{2}+2}\left[
K_{\frac{n}{2}-1}\left(\frac{kr}{\ell}\right)
I_{\frac{n}{2}}\left(\frac{kr}{\ell}\right)
+I_{\frac{n}{2}-1}\left(\frac{kr}{\ell}\right)
K_{\frac{n}{2}}\left(\frac{kr}{\ell}\right)
\right] \label{eq:rsqsimp1}\\
&\qquad +\frac{2\ell^2}{k^2}r^{\frac{n}{2}+1}\left[
I_{\frac{n}{2}-1}\left(\frac{kr}{\ell}\right)
K_{\frac{n}{2}+1}\left(\frac{kr}{\ell}\right)
-K_{\frac{n}{2}-1}\left(\frac{kr}{\ell}\right)
I_{\frac{n}{2}+1}\left(\frac{kr}{\ell}\right)
\right] \label{eq:rsqsimp2}\\
&\qquad - \left[\frac{\ell}{k}R^{\frac{n}{2}+2}K_{\frac{n}{2}}
\left(\frac{kR}{\ell}\right)
+\frac{2\ell^2}{k^2}R^{\frac{n}{2}+1}
K_{\frac{n}{2}+1}
\left(\frac{kR}{\ell}\right)
\right]I_{\frac{n}{2}-1}\left(\frac{kr}{\ell}\right) \cr
&=\frac{\ell^2}{k^2}r^{\frac{n}{2}+1}
+\frac{2\ell^4}{k^4}r^{\frac{n}{2}-1}
- R^{\frac{n}{2}+1}
\left[\frac{\ell}{k}RK_{\frac{n}{2}}
\left(\frac{kR}{\ell}\right)
+\frac{2\ell^2}{k^2}
K_{\frac{n}{2}+1}
\left(\frac{kR}{\ell}\right)
\right]I_{\frac{n}{2}-1}\left(\frac{kr}{\ell}\right). \notag
\end{align}
\end{subequations}
Here the terms inside the square bracket of~\eqref{eq:rsqsimp1}
or ~\eqref{eq:rsqsimp2} are equal to $\ell/kr$ or
$2n^2\ell^2/(kr)^2$,
by the
recursive relations~\eqref{eq:recBesselIK} and the
identity~\eqref{eq:KIexchange}.

When $\rho(s) = s^{n/2-1}J_{n/2-1}(as)$, using~\eqref{eq:BesselJKint}
and~\eqref{eq:BesselJIint},
\begin{align*}
 &\quad  K_{\frac{n}{2}-1}(kr/\ell)
\int_0^r sI_{\frac{n}{2}-1}(ks/\ell)J_{\frac{n}{2}-1}(as)ds
+I_{\frac{n}{2}-1}(kr/\ell)
\int_r^R sK_{\frac{n}{2}-1}(ks/\ell)J_{\frac{n}{2}-1}(ar)ds
\cr
&=\frac{rk\ell}{a^2\ell^2+k^2}\left[
I_{\frac{n}{2}-1}\left(\frac{kr}{\ell}\right)
K_{\frac{n}{2}-2}\left(\frac{kr}{\ell}\right)+
I_{\frac{n}{2}-2}\left(\frac{kr}{\ell}\right)
K_{\frac{n}{2}-1}\left(\frac{kr}{\ell}\right)
\right]J_{\frac{n}{2}-1}(ar)
\cr &\quad -\frac{R\ell}{a^2\ell^2+k^2}\left[
kJ_{\frac{n}{2}-1}(aR)K_{\frac{n}{2}-2}\left(\frac{kR}{\ell}\right)
+a\ell J_{\frac{n}{2}-2}(aR)K_{\frac{n}{2}-1}\left(\frac{kR}{\ell}\right)
\right] I_{\frac{n}{2}-1}\left(\frac{kr}{\ell}\right)
\cr &=
\frac{\ell^2}{a^2\ell^2+k^2}J_{\frac{n}{2}-1}(ar)
-\frac{R\ell}{a^2\ell^2+k^2}\left[
kJ_{\frac{n}{2}-1}(aR)K_{\frac{n}{2}-2}\left(\frac{kR}{\ell}\right)
\right.
\cr &\left.\qquad\qquad
+a\ell J_{\frac{n}{2}-2}(aR)K_{\frac{n}{2}-1}\left(\frac{kR}{\ell}\right)
\right]
I_{\frac{n}{2}-1}\left(\frac{kr}{\ell}\right).
\end{align*}

Finally when $\rho(s)=s^{n/2-1}I_{n/2-1}(as)$,
using~\eqref{eq:BesselIKint} and~\eqref{eq:BesselIIint},
\begin{align*}
&\quad K_{\frac{n}{2}-1}(kr/\ell)
\int_0^r sI_{\frac{n}{2}-1}(ks/\ell)I_{\frac{n}{2}-1}(as)ds
+I_{\frac{n}{2}-1}(kr/\ell)
\int_r^R sK_{\frac{n}{2}-1}(ks/\ell)I_{\frac{n}{2}-1}(ar)ds \cr
&=-\frac{rk\ell}{a^2\ell^2-k^2}\left[
I_{\frac{n}{2}-1}\left(\frac{kr}{\ell}\right)
K_{\frac{n}{2}-2}\left(\frac{kr}{\ell}\right)+
I_{\frac{n}{2}-2}\left(\frac{kr}{\ell}\right)
K_{\frac{n}{2}-1}\left(\frac{kr}{\ell}\right)
\right]I_{\frac{n}{2}-1}(ar) \cr
&\quad +\frac{R\ell}{a^2\ell^2-k^2}\left[
kI_{\frac{n}{2}-1}(aR)K_{\frac{n}{2}-2}\left(\frac{kR}{\ell}\right)
+a\ell I_{\frac{n}{2}-2}(aR)K_{\frac{n}{2}-1}\left(\frac{kR}{\ell}\right)
\right] \cr
&= -\frac{\ell^2}{a^2\ell^2+k^2}J_{\frac{n}{2}-1}(ar)
+\frac{R\ell}{a^2\ell^2-k^2}\left[
kI_{\frac{n}{2}-1}(aR)K_{\frac{n}{2}-2}\left(\frac{kR}{\ell}\right)
\right. \cr
&\left. \qquad\qquad
+a\ell I_{\frac{n}{2}-2}(aR)K_{\frac{n}{2}-1}\left(\frac{kR}{\ell}\right)
\right] I_{\frac{n}{2}-1}\left(\frac{kr}{\ell}\right).
\end{align*}
Putting all the integrals together, we conclude the explicit
form~\eqref{eq:explicitconv} for the convolution $W\star \rho$.
For example, when $A>0$, $\rho(r) = \mu_1 r^{1-\frac{n}{2}} J_{
\frac{n}{2}-1}(ar)+\mu_2$, collecting the terms in the
integral~\eqref{eq:angint}, we get
\begin{align*}
    (W\star \rho)(r)  &= \mu_2 \frac{C\ell^n-1}{k^2}
    + \mu_1 r^{1-\frac{n}{2}}\left(\frac{C\ell^n}{a^2\ell^2+k^2}
    -\frac{1}{a^2+k^2}\right)J_{\frac{n}{2}-1}(ar) \cr
    &    -
    r^{1-\frac{n}{2}}
    \left\{ \mu_1\frac{C\ell^{n-1}R}{a^2\ell^2+k^2}\left[
            kJ_{\frac{n}{2}-1}(aR)K_{\frac{n}{2}-2}\left(\frac{kR}{\ell}
            \right)+a\ell J_{\frac{n}{2}-2}(aR)K_{\frac{n}{2}-1}\left(
            \frac{kR}{\ell}\right)\right]       \right. \cr
            &\left. + \mu_2 \frac{\ell}{k}K_{\frac{n}{2}}\left(
            \frac{kR}{\ell}\right)
        \right\}
    I_{\frac{n}{2}-1}\left(\frac{kr}{\ell}\right)
    +r^{1-\frac{n}{2}}\left\{ \mu_1 \frac{R}{a^2+k^2}
    \left[ kJ_{\frac{n}{2}-1}(aR)K_{\frac{n}{2}-2}(kR)
        \right.\right.\cr
&\left.\left.
+aJ_{\frac{n}{2}-2}(aR)K_{\frac{n}{2}-1}(kR)+\mu_2 \frac{1}{k}
K_{\frac{n}{2}}(kR)
\right]
    \right\} I_{\frac{n}{2}-1}(kr)\,.
\end{align*}
The first term $\mu_2 (C\ell^n-1)/k^2$ is the desired constant
$D$, and the
factor $C\ell^n/(a^2\ell^2+k^2)-1/(a^2+k^2)$ in the second term
vanishes by the definition of $a^2$. The rest of the terms are a
linear combination of $I_{\frac{n}{2}-1}(kr/\ell)$ and
$I_{\frac{n}{2}-1}(kr)$, and they can be rearranged into the
form~\eqref{eq:explicitsol} with the coefficient of $\mu_2$
normalized to one to simplify the later proofs. The explicit form
for $W\star \rho$ when $A=0$ or $A<0$ has similar structures,
and its simplification leads to the final expression~\eqref{eq:explicitconv}.

\subsection*{Acknowledgments}
JAC was supported by projects MTM2011-27739-C04-02 and
2009-SGR-345 from Ag\`encia de Gesti\'o d'Ajuts Universitaris i de
Recerca-Generalitat de Catalunya. JAC acknowledges support from
the Royal Society through a Wolfson Research Merit Award. JAC, YH,
and SM were supported by Engineering and Physical Sciences
Research Council grant number EP/K008404/1.

\end{document}